\def\Cap{\operatorname{Cap}}
\def\Cp{C_*}
\def\E{\mathcal E}
\def\H{\mathit{co}\mathcal H}
\def\K{\mathcal K}
\def\diam{\operatorname{diam}}
\def\dist{\operatorname{dist}}
\def\er{\mathbb R}
\def\rn{\mathbb R^n}
\def\haus{\mathcal H^{n{-}1}}
\def\er{\mathbb R}
\def\Lip{\operatorname{Lip}}
\def\cp{\operatorname{cap}}
\def\Cp{\operatorname{Cap}}
\def\loc{\mathrm{loc}}
\def\rn{\mathbb R^n}
\def\inf{\operatorname{inf}}
\def\sup{\operatorname{sup}}
\newdimen\vintkern
\newdimen\vintkern
\def\vint{{-}\kern-\vintkern\int}
\def\eqn#1$$#2$${\begin{equation}\label#1#2\end{equation}}
\def\inte{\operatorname{int}_*\!}
\def\exte{\operatorname{ext}_*\!}
\def\standhaus{\widetilde{\mathcal H}^{n{-}1}}
\def\H{\mathcal H}
\newtheorem{thm}{Theorem}[section]
\newtheorem{lemma}[thm]{Lemma}
\newtheorem{cor}[thm]{Corollary}
\newtheorem{prop}[thm]{Proposition}
\theoremstyle{definition}
\newtheorem{examples}[thm]{Examples}
\newtheorem{rmrk}[thm]{Remark}
\long\def\colred#1\endred{{\color{red}#1}}
\long\def\colgreen#1\endgreen{{\color{green}#1}}
\long\def\colblue#1\endblue{{\color{blue}#1}}
\title[$AM$--modulus and and Hausdorff measure of codimension one]{\vspace{0.5in}
{\bf $AM$--modulus and Hausdorff measure of codimension one in metric measure spaces\\}}
\author{Vendula Honzlov\'a Exnerov\'a}
\address{Department of Mathematical Analysis, Faculty of Mathematics and Physics,
Charles University in Prague, So\-ko\-lovsk\'a 83,  Prague 8, 186\,75 Czech Republic}
\email{honzlova@karlin.mff.cuni.cz}
\author{Jan Mal\'y}
\address{Department of Mathematical Analysis, Faculty of Mathematics and Physics,
Charles University in Prague, So\-ko\-lovsk\'a 83,  Prague 8, 186\,75 Czech Republic}
\email{maly@karlin.mff.cuni.cz}
\author{Olli Martio}
\address{Department of Mathematics and Statistics, FI-00014 University of Helsinki, Finland}
\email{olli.martio@helsinki.fi}
\thanks{The first and the second authors have been supported by the grant GA\,\v{C}R 
P201/18-07996S of the Czech Science Foundation.}
\begin{document}

\begin{abstract}
Let $\Gamma(E)$ be the family of all paths which
meet a set $E$ in the metric measure space $X$. The set function $E \mapsto AM(\Gamma(E))$ defines the $AM$--modulus measure in $X$ where $AM$ refers to the approximation modulus \cite{M}. We 
  compare $AM(\Gamma(E))$ to the Hausdorff measure $co\H^1(E)$ of codimension one in $X$ and show that
$$co\H^1(E) \approx AM(\Gamma(E))$$
 for Suslin sets $E$ in $X$. This leads to  a new characterization of sets of finite perimeter in $X$ in terms of the $AM$--modulus. We
 also study the level sets of $BV$ functions and show that for a.e. $t$ these sets
 have finite $co\H^1$--measure.
Most of the results are new also in $\rn$.
\end{abstract}

\keywords
{$AM$--modulus; perimeter; sets of co--dimension one; level sets of $BV$-functions; metric measure spaces}

\maketitle

\section{Introduction}

In a metric measure space $X$ the modulus of a curve
family offers a substitute for the Fubini theorem
and provides an important tool for analysis in $X$, see e.g. \cite{BB}. The $M_p$--modulus,
$p \geq 1$, is used to create a space  in $X$ similar to the
Sobolev space $W^{1,p}(\rn)$ and the $AM$--modulus
 was introduced as a weaker version than the
$M_1$--modulus to study functions of bounded variation in $X$ and in $\rn$, see \cite{M}, \cite{HEMM} and \cite{HEMM2}.

Let $\Gamma(E)$ be the family of all
paths in $X$ which meet the set $E \subset X$.
The set function $E \mapsto AM(\Gamma(E))$ defines a metric outer measure, the $AM$--modulus measure, in $X$ and satisfies
\begin{equation} \label{Eq1}
AM(\Gamma(E)) \leq C\, co\H^1(E)
\end{equation}
provided that the measure $\mu$ is doubling, see
Theorem \ref{Tupper} below. Here $co\H^1$ refers to
the Hausdorff measure of codimension one in $X$. We also present the generalization of
(\ref{Eq1}) for all measures $co\H^p$, $p \geq 1$.

In this paper we are interested in the inequalities opposite to (\ref{Eq1}). Such an 
inequality was obtained in \cite{HEMM2} for sets $E$ contained in $(n-1)$--rectifiable 
sets in $\rn$. Here we show that this inequality holds for 
Borel sets in $X$, and more generally for Suslin sets and for arbitrary 
sets with $\sigma$--finite $co\H^1$--measure, provided that $X$ satisfies standard regularity assumptions, i.e. the measure $ \mu$ in $X$ is doubling, $X$ is complete and supports the
Poincar\'e inequality.
 Thus in $\rn$ the standard $(n-1)$--Hausdorff measure $\H^{n-1}$ satisfies
\begin{equation} \label{EHRn}
 \H^{n-1}(E) \approx AM(\Gamma(E))
 \end{equation}
for all Suslin sets and arbitrary sets of $\sigma$--finite $\H^{n-1}$--measure. Note that the
ordinary $M_p$--modulus is more adapted to measure
the family $\Gamma(E, \Omega)$ of all curves which join $E$ to the complement of a fixed open set $\Omega$ and then the result corresponds to the
$p$--capacity of $E$. Thus the relation to
the $(n-p)$--dimensional Hausdorff measure is mediated through the capacity and does not provide as close 
a connection as (\ref{EHRn}), see also Remark \ref{RAMRn}.

We apply the above results to study the $AM$--modulus of path families which are closely associated with sets of finite perimeter in $X$. 
Although there is extensive literature on sets of finite perimeter in metric measure spaces, see \cite{A2}, \cite{AFP}, \cite{KKST},
\cite{KL}, \cite{La} and \cite{Mi}, the $AM$--modulus has not yet been used to characterize sets of finite perimeter in $X$ and our results extend the characterizations obtained in \cite{HEMM2} 
in $\rn$ to $X$. 

We study  the level sets of a $BV$ function $u$ in the final section and show that these sets
have finite $co\H^1$--measure for a.e. $t$. In particular, it follows that the ordinary level 
set $u^{-1}(t)$ of a continuous $BV$ function 
$u$  has finite $co\H^1$--measure for a.e. $t$.

\section{Preliminaries}

Let $(X,d)$ be a metric space and $\mu$ a Borel regular measure in $X$. The measure 
$\mu$ is \textit{doubling} if there is a constant $C_{\mu}$ such that 
$\mu(B(x,2r)) \leq C_{\mu}\, \mu(B(x,r))$ and $0 <\mu(B(x,r))<\infty$ for 
all open balls $B(x,r)$ in $X$.

A continuous mapping $\gamma\colon [a,b]\to X$ is called a \textit{curve}.
We say that a curve $\gamma$ is a \textit{path}
if it has a finite and non--zero total length; in this case we parametrize
$\gamma$ by its arclength. 
The \textit{locus} of $\gamma$ is defined as $\gamma([0,\ell])$ and 
denoted by $\langle\gamma\rangle$.

We refer to \cite{M} and \cite{HEMM} for the properties of the $AM_p$--modulus and to 
\cite{BB} and \cite{Fug} for those of the $M_p$--modulus. For completeness we recall the definitions.

Let $\Gamma$ be a family of paths in $X$. 
A non--negative Borel function $\rho$ is $M$--\textit{admissible}, or simply admissible, for $\Gamma$ if
$$ \int_{\gamma} \rho \, ds \geq 1$$
for every $\gamma \in \Gamma$. For $p \geq 1$ the $M_p$--\textit{modulus} of $\Gamma$ is defined as
$$M_p(\Gamma) = \inf \int_X \rho^p \, d\mu$$
where the infimum is taken over all admissible functions $\rho$. 

A sequence of non--negative Borel functions $\rho_i $, $i = 1, \,2, \,...\,,$ is 
$AM$--\textit{admissible}, or simply admissible,
for $\Gamma$ if
\begin{equation}
\label{eqn1}
\liminf_{i \rightarrow \infty} \int_{\gamma} 
\rho_i \, ds \geq 1
\end{equation}
for every $\gamma \in \Gamma$. The \textit{approximation
modulus}, $AM_p$--modulus for short, of $\Gamma$ is defined as
\begin{equation}
\label{eqn2}
AM_p(\Gamma) = \inf_{(\rho_i)} \Big\{ \liminf_{i \rightarrow \infty} \int_{X} \rho_i^p \, d\mu \Big\}
\end{equation}
where the infimum is taken over all $AM$--admissible sequences $(\rho_i)$ for $\Gamma$. We mostly consider the $AM_1$--modulus and use the abbreviation $AM = AM_1$. Note that for $p> 1$, $AM_p(\Gamma) = M_p(\Gamma)$ for every path family $\Gamma$ in $X$, see \cite[Theorem 1]{HEMM}, however, sometimes it is easier to use the $AM_p$--modulus than the $M_p$--modulus.
Note also that $AM(\Gamma)\le M_1(\Gamma)$ for all path families $\Gamma$ in $X$ and
it could happen that $AM(\Gamma)= 0$ but
$M_1(\Gamma) = \infty$ for some family $\Gamma$.

We define the $AM_c$--modulus of $\Gamma$ with respect to the $AM$--modulus with the 
difference that the admissible sequence are now required to consist of 
continuous functions.

The $AM$ modulus or the $AM_c$ modulus can be also assigned to a 
family $\mathcal E$ of measures, $\int_{\gamma}\rho_i\,ds$, 
$\gamma\in\Gamma$, is then 
replaced by $\int_X\rho_i\,d\nu$, $\nu\in \mathcal E$. For a more 
precise definition we refer to \cite{HEKMM}. 

For $E \subset X$,  $\Gamma(E)$ denotes the family of all paths which meet $E$. From \cite[Theorem 1]{HEMM2} it follows that 
the set function $\phi: E \mapsto AM(\Gamma(E))$ 
is a metric outer measure in $X$ and hence all Borel sets are $\phi$ measurable. Almost the same proof shows that for $p \geq 1$ the set functions $E \mapsto AM_p(\Gamma(E))$
and  $E \mapsto M_p(\Gamma(E))$ also define metric outer measures in $X$. 

We denote by $\H^{n-p}$ the ordinary Hausdorff measure
of codimension $p$ in $\rn$. In metric spaces, the dimension $n$ is not always clearly determined. The right replacement of $\H^{n-p}$ is then the \textit{Hausdorff
measure $co\H^p(E)$ of codimension} $p$ defined as
$$co\H^p(E)=\sup_{\delta>0}\, co\H^p_{\delta}(E)$$
where
for $\delta > 0$
$$co\H^p_{\delta}(E)=\inf
\Bigl\{
\sum_{j=1}^{\infty}\frac{\mu(B(x_j,r_j))}{r_j^p}\colon E\subset
\bigcup_{j=1}^{\infty} B(x_j,r_j),\;\sup_jr_j<\delta
\Bigr\}$$
denotes the $\delta$--content associated with $ co\H^p(E)$. It is easily checked that in $\rn$, $co\H^p$ agrees with the $\H^{n-p}$--measure up to a multiplicative constant. 

In the following, we are chiefly interested in $co\H^1(E)$ and its dependence on $AM(\Gamma(E))$ and we first consider upper bounds for $AM(\Gamma(E))$ in terms of $co\H^1(E)$. Such a
result was presented in
\cite[Theorem 3.17]{M} and for completeness we include a proof. For $p>1$ we present a stronger version in $X$ and extend the implication,  see \cite[Theorem 2.27]{HKM} and references therein, that in $\rn$,
$\H^{n-p}(E) < \infty$ implies that the $p$--capacity of $E \subset \rn$ is zero.

\begin{thm} \label{Tupper}
Suppose that $\mu$ is a doubling measure in $X$ and $E \subset X$. Then 
\begin{equation}  \label{ETu1}
AM(\Gamma(E)) \leq C_{\mu} \, co\H^1(E)
\end{equation}
and for $p > 1$, 
$co\H^p(E)<\infty$ implies 
$M_p(\Gamma(E))=0$. 
\end{thm}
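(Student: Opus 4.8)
The plan is to build, for each of the two assertions, an explicit admissible object (a sequence of Borel functions for the $AM$-part, a single Borel function for the $M_p$-part) out of a near-optimal covering of $E$ by small balls and to estimate its energy directly against the covering sum. Fix $\delta>0$ and choose balls $B_j=B(x_j,r_j)$ with $\sup_j r_j<\delta$, $E\subset\bigcup_j B_j$, and $\sum_j \mu(B_j)/r_j$ close to $co\H^1_\delta(E)$ (resp. $\sum_j \mu(B_j)/r_j^p$ close to $co\H^p_\delta(E)$); discard the trivial case where these sums, or $co\H^1(E)$ itself, are infinite.

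For the first inequality, the point is that any path $\gamma\in\Gamma(E)$ passes through some $B_j$, hence runs for arclength at least $r_j$ inside the concentric ball $2B_j=B(x_j,2r_j)$ — here we use that a path has positive length and that, being connected, if it meets $B_j$ it must travel distance $r_j$ to escape $2B_j$ or else be entirely contained in $2B_j$, in which case its length (which is finite and nonzero, bounded by $2r_j<2\delta$) still exceeds\ldots\ more carefully, one truncates: if $\operatorname{length}(\gamma)<r_j$ one uses that $\langle\gamma\rangle\subset 2B_j$ and rescales. Set $\rho_\delta=\sum_j r_j^{-1}\chi_{2B_j}$. Then $\int_\gamma \rho_\delta\,ds\ge 1$ for every $\gamma\in\Gamma(E)$ (choosing the index $j$ with $\gamma\cap B_j\ne\emptyset$ and integrating the single term $r_j^{-1}\chi_{2B_j}$ over the portion of $\gamma$ in $2B_j$), so $\rho_\delta$ is $M$-admissible, a fortiori the constant sequence $(\rho_\delta)_i=\rho_\delta$ is $AM$-admissible. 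Its energy is
\begin{equation*}
\int_X \rho_\delta\,d\mu \le \sum_j \frac{\mu(B(x_j,2r_j))}{r_j}\le C_\mu\sum_j\frac{\mu(B(x_j,r_j))}{r_j},
\end{equation*}
using doubling in the last step. Taking the infimum over coverings gives $AM(\Gamma(E))\le \int_X\rho_\delta\,d\mu\le C_\mu\, co\H^1_\delta(E)\le C_\mu\, co\H^1(E)$, which is $(\ref{ETu1})$; note $\delta$ drops out because the bound is uniform in $\delta$.

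For the $M_p$ statement with $p>1$, fix $\ep>0$ and, using that $co\H^p(E)<\infty$ forces $co\H^p_\delta(E)$ to be finite and the tail of any near-optimal covering to be small, choose for each $k\in\en$ a covering $\{B(x^k_j,r^k_j)\}_j$ of $E$ with $\sup_j r^k_j<2^{-k}$ and $\sum_j \mu(B(x^k_j,r^k_j))/(r^k_j)^p<\ep\,2^{-k}$. Put $\rho=\sum_k\sum_j (r^k_j)^{-1}\chi_{2B(x^k_j,r^k_j)}$. As above, for each fixed $k$ the partial sum already integrates to at least $1$ along any $\gamma\in\Gamma(E)$, so $\int_\gamma\rho\,ds=\infty\ge 1$ and $\rho$ is $M$-admissible. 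Since $p>1$ we may sum the $L^p$-norms: by Minkowski,
\begin{equation*}
\Bigl(\int_X\rho^p\,d\mu\Bigr)^{1/p}\le\sum_k\Bigl(\int_X\Bigl(\sum_j (r^k_j)^{-1}\chi_{2B(x^k_j,r^k_j)}\Bigr)^p d\mu\Bigr)^{1/p},
\end{equation*}
and each inner term is controlled, after another application of doubling, by $\bigl(C_\mu\sum_j \mu(B(x^k_j,r^k_j))/(r^k_j)^p\bigr)^{1/p}\le (C_\mu\ep)^{1/p}2^{-k/p}$ (here one also needs the elementary bound $(\sum_j a_j\chi_{A_j})^p\le\big(\sum_j a_j^{1/p'}\big)^{p/p'}\sum_j a_j\chi_{A_j}$ via Hölder, or more simply controls the overlap of the $2B^k_j$ using doubling and a Vitali-type argument to pass from $\sum\chi^p$ to $\sum\chi$ up to a constant; in $\rn$ bounded overlap is automatic). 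Summing the geometric series $\sum_k 2^{-k/p}$ gives $\int_X\rho^p\,d\mu\le C(p,C_\mu)\,\ep$, hence $M_p(\Gamma(E))\le C\ep$, and letting $\ep\to0$ yields $M_p(\Gamma(E))=0$.

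The main obstacle is the technical handling of overlaps in the $M_p$ estimate: replacing $\bigl(\sum_j (r^k_j)^{-1}\chi_{2B^k_j}\bigr)^p$ by a constant times $\sum_j (r^k_j)^{-p}\chi_{2B^k_j}$ requires either an a priori bounded-overlap property of the enlarged balls or a Besicovitch/Vitali covering refinement, and in a general doubling metric space one has to argue this with some care rather than take it for granted. A secondary, smaller point is the degenerate short-path case in the admissibility check for $(\ref{ETu1})$, which is dispatched by the containment $\langle\gamma\rangle\subset 2B_j$ observation but should be stated cleanly.
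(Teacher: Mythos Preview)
For \eqref{ETu1}, your single function $\rho_\delta$ (equivalently, the constant sequence) is \emph{not} admissible: a path $\gamma$ with $0<\ell(\gamma)<r_j$ meeting $B_j$ gives $\int_\gamma r_j^{-1}\chi_{2B_j}\,ds=\ell(\gamma)/r_j<1$, and the containment $\langle\gamma\rangle\subset 2B_j$ does not rescue this --- it is precisely what produces that integral. The fix, which is what the paper does, is to let $\delta=1/j$ vary and use the genuine sequence $(\rho_{1/j})_j$: for any fixed $\gamma$ one has $\diam\langle\gamma\rangle>4/j$ once $j$ is large, so $\gamma$ must escape the doubled ball and the $\liminf$-admissibility required by $AM$ holds. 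The energy bound then reads $\liminf_j\int_X\rho_{1/j}\,d\mu\le C_\mu\lim_j co\H^1_{1/j}(E)=C_\mu\,co\H^1(E)$. This is exactly where the ``approximation'' in $AM$ earns its keep; a single $M$-admissible function does not exist here.

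For $p>1$ the error is more serious: you claim coverings with $\sum_j\mu(B_j^k)/(r_j^k)^p<\ep\,2^{-k}$, but any such sum is bounded below by $co\H^p_{2^{-k}}(E)$, which increases to $co\H^p(E)$ and need not vanish. The construction is impossible when $co\H^p(E)>0$, so your overlap worry never even arises. The paper's route is quite different. First, setting $\rho_j=\bigl(\sum_i(r_i^j)^{-p}\chi_{2B_i^j}\bigr)^{1/p}$ gives $\rho_j^p=\sum_i(r_i^j)^{-p}\chi_{2B_i^j}$ \emph{exactly} (no overlap loss) while $\rho_j\ge (r_i^j)^{-1}\chi_{2B_i^j}$ pointwise secures admissibility, whence $AM_p(\Gamma(E))\le C_\mu\,co\H^p(E)$; then $M_p=AM_p$ for $p>1$ (from \cite{HEMM}) yields $M_p(\Gamma(E))<\infty$. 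To drive this to zero the paper averages: pick $M$-admissible $\rho_j$ with $\int\rho_j^p\le C$ and $\mu(\{\rho_j>0\})\to 0$ (possible since $\mu(E)=0$, so one may restrict $\rho_j$ to small open neighbourhoods of $E$), extract a subsequence with nearly disjoint supports so that $\int(\rho_{m_1}+\cdots+\rho_{m_k})^p\le C'k$, and test with $g_k=k^{-1}\sum_{j\le k}\rho_{m_j}$, which is admissible with $\int g_k^p\le C'k^{1-p}\to 0$.
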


\begin{proof} 
First, we prove
\begin{equation}  \label{ETup} 
AM_p(\Gamma(E)) \leq C_{\mu} \, co\H^p(E)
\end{equation}
for any $1\le p<\infty$.
We may assume that $co\H^p(E) < \infty$. 
For $j = 1, \,2,\, ...$ choose a covering $B(x^j_i,r^j_i), \,
i=1,\,2,\, ...$ , of $E$ such that $r^j_i < 1/j$ and
$$ \sum_i \frac{\mu(B(x^j_i,r^j_i))}{(r_i^j)^p} \leq co\H^p_{1/j}(E)  +\frac{1}{j}.$$
Set
$$\rho_j(x) = \Bigl\{ \sum_i\frac
{1}{(r^j_i)^p}\chi_{B^j_i}(x)\Bigr\}^{1/p}$$
where $B^j_i = B(x^j_i,2r^j_i)$.
Then $\rho_j$ is a Borel function and we show that the sequence $(\rho_j)$ is admissible for 
$\Gamma(E)$. Indeed, if $\gamma \in \Gamma(E)$, then
$\gamma$ meets $E$ and since $\gamma$ is not a constant path, $\diam \, \langle\gamma\rangle > 4/j$ for large $j$
and hence there is $j_0$ such that for $j \geq j_0$ we find $i = i(j)$ such that $\gamma$ meets $B(x^j_i, r^j_i)$
and $X \setminus B^j_i$. Thus $\gamma$ travels in $B^j_i$ at least distance $r^j_i$.
Consequently for $j \geq j_0$
$$ \int_{\gamma} \rho_j \, ds \geq \int_{\gamma } \frac{ \chi_{B^j_{i(j)}} }{r^j_{i(j)}}   \, ds \geq 1$$
and hence
$$\liminf_{j \rightarrow \infty}\int_{\gamma} \rho_j \, ds \geq 1.$$
We obtain
$$ 
\aligned
AM_p(\Gamma(E)) &\leq \liminf_{j \rightarrow \infty}\int_{X} \rho_j^p \, d\mu 
= \liminf_{j \rightarrow \infty}\, \sum_i\frac
{\mu(B^j_i)}{(r^j_i)^p} 
\\&
 \leq C_{\mu} \liminf_{j \rightarrow \infty}\sum_i \frac{\mu(B(x^j_i,r^j_i)) }{(r^j_i)^p} 
\leq C_{\mu} \liminf_{j \rightarrow \infty}\, ( co\H^p_{1/j}(E) + \frac{1}{j}) 
\\&=C_{\mu} \, co\H^p(E),
\endaligned
$$
which proves \eqref{ETup}

Now, for $p=1$ we are done. 
If $p>1$, we know by \cite[Theorem 1]{HEMM}
that $M_p=AM_p$, therefore we have
\eqn{ETupa}
$$
M_p(\Gamma(E)) \leq C_{\mu} \, co\H^p(E).
$$
To prove that $M_p(\Gamma(E))=0$, we first use 
\eqref{ETupa} to
construct a sequence $(\rho_j)$ of $M$--admissible functions for $\Gamma(E)$
such that 
\eqn{CCp}
$$
\int_{X}\rho_j^p\,d\mu\le C\text{ with }C=1+C_{\mu} \, co\H^p(E)
$$ 
and $\mu(\{\rho_j>0\})\to 0$. 
Note that  $\mu(\{\rho_j>0\})$ can be made arbitrary small. To see this let $\varepsilon > 0$ and since $\mu(E) = 0$ we can choose an open
set $G \supset E$ with $\mu(G) < \varepsilon$. 
If $\rho$ is admissible for $\Gamma(E)$, we set 
$$
\tilde\rho=
\begin{cases}
\rho&\text{in }G,\\
0&\text{ in }X\setminus G.
\end{cases}
$$
Each path $\gamma \in \Gamma(E)$
has a subpath $\tilde{\gamma}\in \Gamma(E)$ with locus in $G$. Then
$$
\int_{\gamma}\tilde\rho\,ds \ge \int_{\tilde\gamma}\rho\,ds\ge 1,
$$
and thus $\tilde \rho$ is admissible for $\Gamma(E)$ as well.
Moreover, $\mu(\{ \tilde{\rho}> 0 \}) < \varepsilon$ and
$$ \int_X \tilde{\rho}^p \, d\mu \leq \int_X \rho^p \, d\mu .$$

Now, we select a special subsequence.
We proceed by induction.  Set $m_1=1$. If $m_1,\dots,m_{j-1}$ are determined,
we find $m_j$ such that 
\eqn{2-j}
$$
\int_{E_j}(\rho_{m_1}+\dots+\rho_{m_{j-1}})^p\,d\mu<2^{-j},
$$
holds with $E_j=\{\rho_{m_j}>0\}$.
We claim that 
\eqn{sump}
$$
\int_X(\rho_{m_1}+\dots+\rho_{m_{j}})^p\,d\mu\le 2^{p-1}(Cj+1).
$$
Indeed, it follows from \eqref{CCp} as we prove
\eqn{induc}
$$
\int_X(\rho_{m_1}+\dots+\rho_{m_{j}})^p\,d\mu\le
2^{p-1}\Bigl(\int_X(\rho_{m_1}^p+\dots+\rho_{m_{j}}^p)\,d\mu+\sum_{i=1}^{j}2^{-i}\Bigr)
$$
by induction.
The inequality is trivial for $j=1$.
If it holds for $j-1$, using  \eqref{2-j} we obtain
$$
\aligned
\int_{X}(\rho_{m_1}&+\dots+\rho_{m_{j}})^p\,d\mu
\le \int_{X\setminus E_j}(\rho_{m_1}+\dots+\rho_{m_{j-1}})^p\,d\mu
\\&\qquad+\int_{E_j}(\rho_{m_1}+\dots+\rho_{m_{j}})^p\,d\mu
\\&
\le 2^{p-1}\Bigl(\int_X(\rho_{m_1}^p+\dots+\rho_{m_{j-1}}^p)\,d\mu+\sum_{i=1}^{j-1}2^{-i}\Bigr)
\\&\qquad+2^{p-1}\Bigl(\int_X\rho_{m_j}^p\,d\mu+\int_{E_j}(\rho_{m_1}+\dots+\rho_{m_{j-1}})^p\,d\mu\Bigr)
\\&
\le 2^{p-1}\Bigl(\int_X(\rho_{m_1}^p+\dots+\rho_{m_{j}}^p)\,d\mu+\sum_{i=1}^{j}2^{-i}\Bigr)
\endaligned
$$
which proves \eqref{induc} for $j$.

Finally, we test the $M_p$-modulus of $\Gamma(E)$ by the admissible functions
$$
g_k=\frac{1}k\sum_{j=1}^k\rho_{m_j}.
$$
Then it is evident that each $g_k$ is admissible for $\Gamma(E)$ and 
by \eqref{sump} 
$$
M_p(\Gamma(E)) \le \int_{X}g_k^p\,d\mu\le 2^{p-1}k^{-p}(Ck+1).
$$
\end{proof}

\begin{rmrk}
\label{RAMRn}
Consider the inverse implication in Theorem \ref{Tupper} for $p > 1$ in $\rn$.
Let $E \subset \rn$ be a Borel set with
$M_p(\Gamma(E)) < \infty$, $1 < p \le n$.
If $K \subset E$ is compact, then
$$M_p(\Gamma(K)) \le M_p(\Gamma(E)) < \infty$$
and it easily follows that for all open sets
$\Omega \supset K$
$$ \cp_p(K, \Omega) \le  M_p(\Gamma(K))$$
where $ \cp_p(K, \Omega)$ stands for the
ordinary variational $p$--capacity of the
condenser $(K, \Omega)$, see Section 3 and \cite[Chapter 2]{HKM}.
From \cite[Lemma 2.34]{HKM} it follows that $K$ has $p$--capacity zero and hence by the Choquet capacitability theorem $E$ has also capacity zero. This implies, see e.g. \cite[Theorem 2.27]{HKM}, that
the Hausdorff dimension of $E$ is at most $n-p$ but not that $\H^{n-p}(E) < \infty$.
\end{rmrk}
 
We also need some properties of functions of bounded
variation ($BV$) in $X$, see \cite{Mi} (in metric measure spaces) and 
\cite{AFP} (in the Euclidean spaces). Let $\Omega \subset X$ be open and
denote by $\Lip_{\textnormal{loc}}(\Omega)$ the set of locally Lipschitz
functions in $\Omega$.
Given $u \in L^1_{\textnormal{loc}}(\Omega)$ and an open set $G\subset \Omega$ we 
define
$$ 
V(u,G) = \inf
\Bigl\{ \liminf_i \int_{G} |\nabla u_i| \, d\mu \colon u_i \rightarrow u \textnormal{ in 
} L^1_{\loc}(G)\Bigr\}
$$
Here $|\nabla u(x)|$ stands for the local Lipschitz constant for $u$ at $x$, i.e.
$$ |\nabla u(x)| =  \liminf_{r \rightarrow 0} \sup_{y \in B(x,r)} \frac{|u(y) - u(x)|}{r},$$
see \cite[Section 1.3]{BB}.
A function  has 
\textit{bounded variation} in $\Omega$, $u \in BV(\Omega)$, if 
$V(u,\Omega)<\infty$.

Let $\Omega \subset X$ be open and let $E \subset X$ be measurable. 
The \textit{perimeter} of $E$ in $\Omega$ is 
$P(E,\Omega)=V(\chi_E,\Omega)$ and we write
$P(E)=P(E,X).$

The space $X$ supports the (weak) $BV$--Poincar\'e inequality, see \cite[Remark 3.5]{Mi}, if
\begin{equation} \label{EPBV}
 \int_{B(x,r)}|u -u_{B(x,r)} |\, d\mu \leq C_P \, r\,V(u,B(x,\lambda_P r))
\end{equation}
in each ball $B(x,r)$ and for each $u \in BV(X)$. Here $u_{B(x,r)}$ stands for 
the mean value of $u$ in $B(x,r)$. The constants $C_P\geq 1$ and 
$\lambda_P \geq 1$ are independent of $B(x,r)$ and $u$ and called the Poincar\'e
constants of $X$. Note that (\ref{EPBV}) is a consequence of 
the standard weak Poincar\'e inequality for integrable 
functions with upper gradients, see \cite[Chapter 4]{BB} and \cite{Mi}. 

We use the standard assumptions (A) on the space $X$:
\begin{itemize}
\item $X$ is complete,
\item the measure $\mu$ is doubling,
\item $X$ supports the $BV$--Poincar\'e inequality
(\ref{EPBV}).
\end{itemize}
Note that if $\mu$ is doubling and $X$ is complete,
then $X$ is proper, i.e. closed and bounded subsets of
$X$ are compact, see \cite[Section 3.1]{BB}. Moreover, $X$
is connected \cite[Proposition 4.2]{BB}.

\section{Newtonian and perimeter capacities in $X$}

Throughout this and the next section we assume that $(X,d)$ and $\mu$ satisfy the assumptions (A). 

Let $G$ be a bounded open set in $X$, let $K$ be a compact subset of $G$ and 
let $ \Lip_0(K,G) $ be the set of all Lipschitz functions $u$ with compact support in $G$
satisfying  $u \geq 1$ on $K$. We define 
\eqn{capcomp}
$$ \cp_1(K,G) = \inf \Big\{
\int_G |\nabla u| \, d\mu: \, u \in \Lip_0(K,G) \Big\} .$$
Obviously the infimum does not change if restricted to test functions satisfying $0 \leq u \leq 1$.

It is easy to see that $\Lip_0(K,G) \neq \emptyset$
if $G  \neq \emptyset$ and thus $ \cp_1(K,G) < \infty$. Note that if $G$ is 
compact, then the constant function $1$ is a competitor and thus $ \cp_1(K,G)= 0$.

If $U \subset G$ is open, then we set
$$ \cp_1(U,G) = \sup \{ \cp_1(K,G): \,
K \subset U \textnormal{ compact}\}$$
and for an arbitrary set $E \subset G$
$$  \cp_1(E,G) = \inf \{ \cp_1(U,G): \,
 U \textnormal{ open }, E \subset U \subset G \}.$$ 

Now there are two definitions for $\cp_1(E,G)$ when $E$ is compact 
but since the competitors are continuous the next lemma is immediate. 

\begin{lemma}  \label{Lcoincide}
If $K \subset G$ is compact, then
\begin{equation} \label{Ecoin}
\cp_1(K,G) = \inf \{\cp_1(U,G) : \, U \textnormal{ open, }  K \subset U \subset G\},
\end{equation}
where the capacity on the left is according to \eqref{capcomp}.
\end{lemma}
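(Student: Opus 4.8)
The plan is to prove the two inequalities between the number $\cp_1(K,G)$ defined by \eqref{capcomp} and the quantity $c:=\inf\{\cp_1(U,G)\colon U\text{ open},\ K\subset U\subset G\}$ on the right-hand side of \eqref{Ecoin}.

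For $c\ge \cp_1(K,G)$ one only observes that whenever $U$ is open with $K\subset U\subset G$, the compact set $K$ is itself one of the competitors in the supremum defining $\cp_1(U,G)$, so $\cp_1(U,G)\ge \cp_1(K,G)$; taking the infimum over all such $U$ gives $c\ge \cp_1(K,G)$.

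For the reverse inequality, fix $\ep>0$ and choose $u\in\Lip_0(K,G)$ with $0\le u\le1$ (this normalization is harmless, as remarked after \eqref{capcomp}) and $\int_G|\nabla u|\,d\mu\le \cp_1(K,G)+\ep$; in particular $u=1$ on $K$. For $\eta\in(0,1)$ put $U_\eta=\{x\in G\colon u(x)>1-\eta\}$, which is open since $u$ is continuous, contains $K$, and satisfies $\overline{U_\eta}\subset\spt u\subset G$. Given any compact $K'\subset U_\eta$, the truncated and rescaled function $v:=\min\{1,u/(1-\eta)\}$ belongs to $\Lip_0(K',G)$: it is Lipschitz, equals $1$ on $K'$ because $u>1-\eta$ there, takes values in $[0,1]$, and $\{v\ne0\}\subset\{u\ne0\}$ has compact closure in $G$. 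Using the chain-rule estimate $|\nabla(\psi\circ u)|\le\Lip(\psi)\,|\nabla u|$ for the local Lipschitz constant, applied with the $1/(1-\eta)$-Lipschitz map $\psi(t)=\min\{1,t/(1-\eta)\}$, we get $|\nabla v|\le (1-\eta)^{-1}|\nabla u|$ pointwise, hence
\[
\cp_1(K',G)\le\int_G|\nabla v|\,d\mu\le\frac{1}{1-\eta}\bigl(\cp_1(K,G)+\ep\bigr).
\]
Taking the supremum over all compact $K'\subset U_\eta$ yields $\cp_1(U_\eta,G)\le (1-\eta)^{-1}(\cp_1(K,G)+\ep)$, so $c\le (1-\eta)^{-1}(\cp_1(K,G)+\ep)$; letting first $\eta\to0$ and then $\ep\to0$ gives $c\le\cp_1(K,G)$, and with the previous step this proves \eqref{Ecoin}.

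The argument is entirely elementary; the only point needing a small idea is that $u\ge1$ on the compact set $K$ does not force $u\ge1$ on a neighborhood of $K$, which is why one enlarges to the open set $\{u>1-\eta\}$ and compensates by the factor $(1-\eta)^{-1}$ when producing admissible competitors on its compact subsets. No hypothesis from (A) beyond the ambient setup is used, consistent with the paper's remark that the lemma is ``immediate''.
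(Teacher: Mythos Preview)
Your proof is correct and is precisely the elaboration of the paper's one-line justification (``since the competitors are continuous the next lemma is immediate''): continuity of $u$ is exactly what makes the superlevel set $U_\eta=\{u>1-\eta\}$ open, and the rescaling $u/(1-\eta)$ then serves as a competitor for every compact $K'\subset U_\eta$. The paper gives no further details, so there is nothing to compare beyond noting that your argument is the intended one written out in full.
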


Next we summarize the main properties of the capacity.
In particular, we show that $\cp_1(\cdot,G)$
defines a Choquet capacity and thus, by the  Choquet capacitability theorem, 
each Suslin (in particular, a Borel) set $E \subset G$ is capacitable.

We also compare  the widely used Newtonian type $p$--capacity 
\begin{equation} \label{ENew}
\widetilde{\cp}_p(E,G) =  \inf_u \int_G (g_u)^p \, d\mu
\end{equation}
for $p = 1$ to $\cp_1(E,G)$. In (\ref{ENew})
the infimum is taken over all (precisely defined)
$u \in N^{1,p}_0(G)$ such that $u \geq 1$ on $E$ and
$g_u$ is the minimal upper gradient of $u$, see
\cite[Section 6.3]{BB}. This is a Choquet capacity if
$p > 1 $ but not in the case $p=1$ because $\widetilde{\cp}_1$
does not satisfy (\ref{i:upwards}) below. For an example see 
\cite[Example 6.18]{BB} where it also becomes evident how
$\cp_1(E,G)$ differs from $\widetilde{\cp}_1(E,G)$.

\begin{prop}\label{p:cap}
\begin{enumerate}[\rm(a)]
\item\label{i:monotone} The set function $E \mapsto \cp_1(E,G)$ is monotone, i.e.
$$
E_1 \subset E_2 \subset G,
  \implies \cp_1(E_1,G) \le \cp_1(E_2,G).  
$$
\item\label{i:compact}
If $K_1,K_2,\dots\subset G$ are compact
and $K_1\supset K_2\supset\dots$,
then
$$
\cp_1\Big( \bigcap_{j=1}^{\infty}K_j , G\Big)
=\lim_{j\to\infty} \cp_1(K_j, G).
$$
\item\label{i:tilde}
$
\cp_1(E,G)\le \widetilde{\cp}_1(E,G)
\text{ and }\cp_1(K,G)= \widetilde{\cp}_1(K,G)
\text{ if }K\text{ is compact} .
$
\item\label{i:strong} 
If $K_1,K_2$ are compact, then 
$$
\cp_1(K_1\cup K_1,G)+\cp_1(K_1\cap K_2,G)\le \cp_1(K_1,G)+\cp_1(K_2,G).
$$
\item \label{i:upwards}
$
 E_1 \subset E _2 \subset \dots \subset G 
\implies \cp_1 \Big(\bigcup_{j=1}^{\infty}E_j,G \Big)
=\lim_{j\to\infty}\cp_1(E_j,G).
$
\item\label{i:Choquet} 
If $E\subset G$ is Suslin, then
$$
\cp_1(E,G) = 
\sup 
\{ \, \cp_1(K,G) : \, K \subset E \textnormal{ compact } \}.
$$
\end{enumerate}
\end{prop}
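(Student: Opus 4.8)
The plan is to prove \eqref{i:monotone}--\eqref{i:upwards} essentially in the order in which they are stated and then to deduce \eqref{i:Choquet} from the classical Choquet capacitability theorem, which applies as soon as $\cp_1(\cdot,G)$ is known to be monotone \eqref{i:monotone}, continuous from above on decreasing sequences of compact sets \eqref{i:compact}, and continuous from below on increasing sequences of arbitrary sets \eqref{i:upwards}. Monotonicity \eqref{i:monotone} is immediate from the three defining regimes once one checks that they are mutually consistent; on compact sets this consistency is exactly Lemma~\ref{Lcoincide}, and on open sets it is clear since an open set is the smallest open neighbourhood of itself. For \eqref{i:compact}, the bound $\cp_1(K,G)\le\lim_j\cp_1(K_j,G)$ is \eqref{i:monotone}; for the reverse, fix $\ep\in(0,1)$, pick $u\in\Lip_0(K,G)$ with $0\le u\le1$ and $\int_G|\nabla u|\,d\mu<\cp_1(K,G)+\ep$, and note that the open set $U=\{u>1-\ep\}$ contains $K=\bigcap_jK_j$. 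Since $X$ is proper, the compact sets $K_j\setminus U$ decrease to $\emptyset$, so $K_{j_0}\subset U$ for some $j_0$, and then $u/(1-\ep)\in\Lip_0(K_{j_0},G)$ gives $\cp_1(K_j,G)\le\cp_1(K_{j_0},G)\le(\cp_1(K,G)+\ep)/(1-\ep)$ for all $j\ge j_0$; letting $\ep\to0$ finishes \eqref{i:compact}.

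For \eqref{i:tilde} with $K$ compact: if $v\in\Lip_0(K,G)$ and $0\le v\le1$, then $|\nabla v|$ is an upper gradient of $v$, so $v$ is admissible for $\widetilde{\cp}_1(K,G)$ and hence $\widetilde{\cp}_1(K,G)\le\int_G|\nabla v|\,d\mu$, giving $\widetilde{\cp}_1(K,G)\le\cp_1(K,G)$ after taking the infimum. For the opposite inequality one takes a near-optimal $u\in N^{1,1}_0(G)$ for $\widetilde{\cp}_1(K,G)$, approximates it in energy by Lipschitz functions (Lipschitz functions are dense in $N^{1,1}_0(G)$ under the assumptions (A), see \cite[Ch.~5--6]{BB}), truncates, and uses the quasicontinuity of $u$ to arrange that the Lipschitz approximant is $\ge1$ on a neighbourhood of $K$; a normalization then produces a competitor for $\cp_1(K,G)$ of controlled energy. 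For an arbitrary $E\subset G$ the inequality $\cp_1(E,G)\le\widetilde{\cp}_1(E,G)$ follows from the compact case, the definition $\cp_1(E,G)=\inf_{U\supset E}\cp_1(U,G)$, the identity $\cp_1(U,G)=\sup_{K\subset U}\widetilde{\cp}_1(K,G)\le\widetilde{\cp}_1(U,G)$ for open $U$, and the outer regularity of $\widetilde{\cp}_1$ (see \cite[Ch.~6]{BB}).

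The strong subadditivity \eqref{i:strong} is obtained by combining \eqref{i:tilde} (which identifies $\cp_1$ with $\widetilde{\cp}_1$ on the compacts $K_1$, $K_2$ and $K_1\cap K_2$, and bounds $\cp_1(K_1\cup K_2,G)\le\widetilde{\cp}_1(K_1\cup K_2,G)$) with the standard strong subadditivity of the Newtonian capacity $\widetilde{\cp}_1$, whose proof tests with $\max(u_1,u_2)$ and $\min(u_1,u_2)$ for near-optimal competitors $u_1,u_2$ and uses the locality $g_{\max(u_1,u_2)}+g_{\min(u_1,u_2)}=g_{u_1}+g_{u_2}$ a.e.\ of minimal upper gradients. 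Now \eqref{i:upwards}: the inequality $\ge$ is \eqref{i:monotone}, and when all $E_j$ are open the reverse bound holds because any compact subset of $\bigcup_jE_j$ already lies in some $E_{j_0}$. For a general increasing sequence $E_j\uparrow E$, first extend \eqref{i:strong} to open sets (writing a compact subset of the union of two open sets as the union of a compact subset of each, using that $X$ is normal), then choose open $U_j\supset E_j$ with $\cp_1(U_j,G)<\cp_1(E_j,G)+\ep2^{-j}$ and set $V_j=U_1\cup\dots\cup U_j$; since $V_{j-1}\cap U_j\supset E_{j-1}$, the extended \eqref{i:strong} together with induction gives $\cp_1(V_j,G)<\cp_1(E_j,G)+\ep$, whence by the open case $\cp_1(E,G)\le\cp_1\big(\bigcup_jV_j,G\big)=\lim_j\cp_1(V_j,G)\le\lim_j\cp_1(E_j,G)+\ep$; letting $\ep\to0$ proves \eqref{i:upwards}. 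Finally \eqref{i:Choquet} is the Choquet capacitability theorem applied to the Choquet capacity $\cp_1(\cdot,G)$.

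The main obstacle I anticipate is \eqref{i:upwards} for general increasing sequences: upgrading \eqref{i:strong} to open sets and then running the telescoping estimate so as to assemble a single increasing open family whose capacities stay within $\ep$ of the numbers $\cp_1(E_j,G)$. A secondary delicate point is the reverse inequality in \eqref{i:tilde} on compact sets, where one must keep track that the defining functional of $\cp_1$ uses the pointwise Lipschitz constant rather than the minimal upper gradient, and that a Lipschitz energy-approximant of a Newtonian competitor can indeed be pushed above the level $1$ on a neighbourhood of $K$ through quasicontinuity.
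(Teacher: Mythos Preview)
Your proposal is correct and follows the same route as the paper: properties \eqref{i:tilde} and \eqref{i:strong} are reduced to the corresponding facts for the Newtonian capacity $\widetilde{\cp}_1$ from \cite[Theorems~6.17, 6.19]{BB}, and \eqref{i:upwards}--\eqref{i:Choquet} are then obtained from Choquet's general theory of capacities. The paper's proof is terser---it simply cites \cite{BB}, \cite{Cho}, \cite{Br}, \cite{Kech} at the relevant steps---whereas you unpack the standard telescoping argument for \eqref{i:upwards} and give an explicit proof of \eqref{i:compact}, but the underlying strategy is identical.
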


\begin{proof}
The properties \eqref{i:monotone} and \eqref{i:compact} are obvious.
The inequality in \eqref{i:tilde} is obvious if $E$ is open; for the
case of $E$ arbitrary we use \cite[Theorem 6.19 (vii)]{BB}
(note that the symbol $\cp_1$ stands for $\widetilde{\cp_1}$ in
\cite{BB}).
The equality for $K$ compact follows from \cite[Theorem 6.19 (x)]{BB}.
The property \eqref{i:strong} follows from \cite[Theorem 6.17 (iii)]{BB}
taking into account the equality in \eqref{i:tilde}.
Now, the properties \eqref{i:upwards} and \eqref{i:Choquet} are obtained
using the general theory of capacities developed by Choquet in \cite{Cho},
see also \cite{Br}, \cite{Kech}.
\end{proof}

If $G$ is a bounded open set in $X$ and $K \subset G$ compact, then we denote by $\Gamma(K,G)$ the family of all paths in $X$ which connect $X \setminus G$ to $K$.

\begin{lemma}\label{Lcap}
If $G$ is a bounded open set in $X$ and $K \subset G$ compact, then
$$
 \cp_1(K,G) = M_1(\Gamma(K,G)) =  AM(\Gamma(K,G)).
$$
\end{lemma}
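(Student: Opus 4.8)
The identity splits into two inequalities, $M_1(\Gamma(K,G)) \le \cp_1(K,G)$ and $\cp_1(K,G) \le AM(\Gamma(K,G))$, together with the trivial $AM(\Gamma(K,G)) \le M_1(\Gamma(K,G))$ already noted in the Preliminaries; chaining these three gives the equality. For the first inequality I would take any competitor $u \in \Lip_0(K,G)$ with $0 \le u \le 1$ and $u \ge 1$ on $K$, and show that $\rho = |\nabla u|$ (the local Lipschitz constant, which dominates the pointwise speed of change of $u$ along any rectifiable curve) is $M$--admissible for $\Gamma(K,G)$: if $\gamma$ runs from $X \setminus G$ to $K$, then along $\gamma$ the function $u$ passes from $0$ to $1$, so $\int_\gamma \rho\,ds \ge |u(\gamma(\ell)) - u(\gamma(0))| \ge 1$. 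Taking the infimum over such $u$ yields $M_1(\Gamma(K,G)) \le \cp_1(K,G)$. (A small point to check: $|\nabla u|$ is an upper gradient of $u$ in the appropriate sense, or at least a Borel function with the line-integral property above; this is standard for locally Lipschitz $u$.)

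The substantive direction is $\cp_1(K,G) \le AM(\Gamma(K,G))$, and this is where the work lies. Let $(\rho_i)$ be an $AM$--admissible sequence for $\Gamma(K,G)$ with $\liminf_i \int_X \rho_i\,d\mu$ close to $AM(\Gamma(K,G))$. I would define, for each $i$, a function $u_i$ on $X$ by the ``metric'' construction
$$
u_i(x) = \min\Bigl\{1,\ \inf_\gamma \int_\gamma \rho_i\,ds\Bigr\},
$$
where the infimum runs over all paths $\gamma$ in $G$ joining $x$ to $X \setminus G$ (with $u_i = 0$ on $X \setminus G$ and $u_i = 1$ where no such short path exists). The point is that $u_i$ is Lipschitz on compact subsets of $G$ relative to the length metric weighted by a regularized $\rho_i$, that $0 \le u_i \le 1$, that $u_i = 1$ on $K$ by admissibility (for large $i$, up to the $\liminf$), and that $|\nabla u_i| \le \rho_i$ $\mu$--a.e.\ (or at least $\le C\rho_i$ after a mollification step). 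The delicate parts are: (i) getting genuine Lipschitz test functions with compact support in $G$ — one typically has to truncate near $\partial G$ and multiply by a cutoff, controlling the extra gradient contribution, which is where properness of $X$ (from completeness $+$ doubling) and the structure of $G$ bounded open are used; (ii) handling the fact that $AM$--admissibility is only a $\liminf$ condition, so $u_i \ge 1$ on $K$ may fail for individual $i$ — one passes to a subsequence or replaces $\rho_i$ by $\rho_i/(1-\delta_i)$ with $\delta_i \to 0$; and (iii) the Borel/measurability and upper-gradient bookkeeping for $u_i$.

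Once such $u_i$ are in hand, one has $u_i \in \Lip_0(K,G)$ (after the cutoff, $0\le u_i\le 1$, $u_i \ge 1$ on $K$), hence
$$
\cp_1(K,G) \le \int_G |\nabla u_i|\,d\mu \le \int_X \rho_i\,d\mu + o(1),
$$
and taking $\liminf_i$ gives $\cp_1(K,G) \le AM(\Gamma(K,G))$. Combining with $AM(\Gamma(K,G)) \le M_1(\Gamma(K,G)) \le \cp_1(K,G)$ forces all three to coincide. I expect the main obstacle to be step (i)–(ii) above: manufacturing the cutoff near $\partial G$ so that $u_i$ has compact support in $G$ while its gradient picks up no more than a negligible amount of extra mass, and simultaneously arranging $u_i \ge 1$ on $K$ from the mere $\liminf$ in the definition of $AM$--admissibility. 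The Poincar\'e inequality in (A) may be invoked here to control $u_i$ and pass from $|\nabla u_i|$ back to $\rho_i$; alternatively this lemma may reduce to the known Euclidean-style identity $\cp_1 = M_1$ from \cite{BB} for the $M_1$ side and only require the new $AM$-argument for the rest.
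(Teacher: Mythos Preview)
Your argument for $M_1(\Gamma(K,G))\le \cp_1(K,G)$ is fine and matches the paper. The real difficulty is the inequality $\cp_1(K,G)\le AM(\Gamma(K,G))$, and here your plan has a genuine gap. Given an $AM$--admissible sequence $(\rho_i)$, the condition $\liminf_i\int_\gamma\rho_i\,ds\ge 1$ holds \emph{for each path $\gamma$ separately}; the index beyond which $\int_\gamma\rho_i\,ds\ge 1-\delta$ depends on $\gamma$. Your function
\[
u_i(x)=\min\Bigl\{1,\ \inf_{\gamma}\int_\gamma\rho_i\,ds\Bigr\}
\]
takes an infimum over \emph{all} paths from $X\setminus G$ to $x$, and there is no reason this infimum should be close to $1$ for any single $i$: one may have $\inf_\gamma\int_\gamma\rho_i\,ds=0$ for every $i$ while still $\liminf_i\int_\gamma\rho_i\,ds\ge 1$ for each fixed $\gamma$. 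Rescaling $\rho_i$ by $1/(1-\delta_i)$ or passing to a subsequence cannot cure this lack of uniformity, so you never obtain $u_i\ge 1$ on $K$ and the competitor test fails.

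The paper circumvents this entirely by splitting the proof. First it shows $\cp_1(K,G)=M_1(\Gamma(K,G))$: for the direction $\cp_1\le M_1$ one takes a \emph{single} $M$--admissible $\rho$ (so $\int_\gamma\rho\ge1$ for every $\gamma$ simultaneously), builds $u(x)=\min\{1,\inf_\gamma\int_\gamma(\rho+\varepsilon)\,ds\}$, and invokes the identification $\cp_1(K,G)=\widetilde{\cp}_1(K,G)$ from Proposition~\ref{p:cap}\eqref{i:tilde}. Then $M_1(\Gamma(K,G))\le AM(\Gamma(K,G))$ is proved by an entirely different mechanism: restrict to paths of length $\le L$, reparametrize them as $1$--Lipschitz curves $\xi\colon[0,L]\to X$, pass to the associated measures $\nu_\xi$, take the weak* closure $\K$ (which is compact by Arzel\`a--Ascoli), and then apply the results of \cite{HEKMM} that $AM=AM_c$ and that $AM(\K)=M_1(\K)$ on compact families of measures. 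Finally one lets $L\to\infty$. The compactness step is precisely what supplies the missing uniformity that your direct construction cannot produce.
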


\begin{proof}
Since for each function $u \in \textnormal{Lip}_0(K,G)$, $|\nabla u|$ is $M$--admissible
for the family $\Gamma(K,G)$, we
have $M_1(\Gamma(K,G)) \leq \cp_1(K,G)$. For the converse inequality we use the method in
\cite[Section 5.2]{BB}. Let $\rho$ be $M$--admissible for $\Gamma(K,G)$ and $\varepsilon > 0$. We may assume
that $\rho$ is lower semi--continuous.
From Lemmata 5.25 and 5.26 in \cite{BB} it follows that the function $\rho + \varepsilon$
is an upper gradient of the lower semi--continuous function 
$$ u(x) = \min \Big(1, \inf_{\gamma}  \int_{\gamma} (\rho +\varepsilon) \, ds \Big)$$
 in $G$. Here the infimum is taken over all paths connecting $X 
 \setminus G$ to $x \in G$. Moreover, $u = 0$ in $X\setminus G$ and $u = 1$ in $K$. 
Using  Proposition \ref{p:cap}\eqref{i:tilde} we obtain 
$$\cp_1(K,G) =\widetilde{\cp_1}(K,G)\leq \int_G (\rho +\varepsilon) \, d\mu \leq \int_G \rho \, d\mu 
+ \varepsilon\, \mu(G)$$
and letting $\varepsilon \rightarrow 0$ we obtain the desired inequality.

For the second equality it suffices to show that $M_1(\Gamma(K,G)) \leq  AM(\Gamma(K,G))$ because $M_1(\Gamma) \geq AM(\Gamma)$ for every path family $\Gamma$ in $X$. Let 
$\Gamma(K,G,L)$ denote the family of all the paths $\gamma$
in $\Gamma(K,G)$ whose length $\ell$ satisfies $ \ell \le L$. Note that 
\begin{equation}
\label{EM1}
M_1(\Gamma(K,G))=\sup_{L}M_1(\Gamma(K,G,L)).
\end{equation}
Indeed, if $\rho$ is admissible for $\Gamma(K,G,L)$, then
$\rho + \frac{1}{L}\chi_G$ is admissible for $\Gamma(K,G)$.

Fix $L$. Each $\gamma\in\Gamma(K,G,L)$ has a reparametrization $\xi\colon [0,L]\to X$
which is a curve with  $\Lip\xi\le1$; we denote the set of all such reparametrizations by $\Xi(K,G,L)$.
For a Borel set $E\subset X$ set
$$
\nu_{\xi}(E)=\int_{\xi}\chi_E\,ds.
$$
Set $\E=\{\nu_{\xi}\colon \xi\in \Xi(K,G,L)\}$. Let $\K$ be the weak* closure of $\E$.
Then 
\eqn{amc}
$$
AM_c(\Gamma(K,G,L))=AM_c(\Xi(K,G,L))=AM_c(\E)=AM_c(\K).
$$
Only the last equality is not obvious. Let $(\xi_j)$ be a sequence of curves from 
$\Xi(K,G,L)$ such that $\nu_{\xi_j}$ converge weak* to $\nu\in\K$. By the Arzel\`a-Ascoli theorem 
(see \cite[p. 169]{Roy})
there exists a subsequence (not relabelled) which converges uniformly to a limit curve
$\xi$, and, by compactness of 
$K$ and openness of $G$, we have $\xi\in \Xi(K,G,L)$. For each non-negative continuous function $\rho$ on
$X$ we have
$$
\int_{\xi}\rho\,ds\le \liminf_j \int_{\xi_j}\rho\,ds= \lim_j\int_{X}\rho\,d\nu_{\xi_j}=\int_{X}\rho\,d\nu.
$$
It follows that each admissible sequence for $AM_c(\E)$ is also admissible for $AM_c(\K)$ and 
thus $AM_c(\K)\le AM_c(\E)$, whereas the converse inequality is obvious. This proves \eqref{amc}.
By \cite[Theorem 5.5]{HEKMM}, $AM(\K)=M_1(\K)$ (as $\K$ is compact) and 
by \cite[Theorem 3.4]{HEKMM}, $AM=AM_c$.
Hence
$$
\aligned
M_1(\Gamma(K,G,L))&\le M_1(\K)=AM_c(\K)=AM_c(\Gamma(K,G,L))\\&
=AM(\Gamma(K,G,L))\le AM(\Gamma(K,G)).
\endaligned
$$
Passing to the supremum over $L$ we obtain the conclusion.
\end{proof}

\begin{lemma}\label{LSus} If $E \subset G$ is a Suslin set, then 
$\cp_1(E,G) \leq AM(\Gamma(E)).$
\end{lemma}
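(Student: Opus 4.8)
The plan is to reduce the statement to the compact case already handled in Lemma~\ref{Lcap}, combined with the Choquet capacitability of Suslin sets from Proposition~\ref{p:cap}\eqref{i:Choquet}. First I would recall that $\Gamma(E)$ is the family of \emph{all} paths meeting $E$, while $\Gamma(K,G)$ in Lemma~\ref{Lcap} is the smaller family of paths joining $X\setminus G$ to $K$. Since $G$ is bounded and $X$ is proper, any path meeting a compact set $K\subset G$ either stays inside $G$ (in which case it does not connect to $X\setminus G$) or leaves $G$; so $\Gamma(K,G)$ is a subfamily of $\Gamma(K)$, and by monotonicity of the $AM$--modulus together with Lemma~\ref{Lcap},
$$
\cp_1(K,G)=AM(\Gamma(K,G))\le AM(\Gamma(K))\le AM(\Gamma(E))
$$
for every compact $K\subset E$. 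Then, taking the supremum over all compact $K\subset E$ and invoking Proposition~\ref{p:cap}\eqref{i:Choquet}, which says precisely that $\cp_1(E,G)=\sup\{\cp_1(K,G):K\subset E\text{ compact}\}$ for Suslin $E$, yields $\cp_1(E,G)\le AM(\Gamma(E))$.

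The one point that needs care, and which I expect to be the main (if modest) obstacle, is the interplay between the two path families when a path stays inside $G$: a path $\gamma\in\Gamma(K)$ meeting $K$ but with $\langle\gamma\rangle\subset G$ is \emph{not} in $\Gamma(K,G)$, so $\Gamma(K,G)\subset\Gamma(K)$ is the correct inclusion and monotonicity $AM(\Gamma(K,G))\le AM(\Gamma(K))$ goes through immediately. The inclusion $\Gamma(K)\subset\Gamma(E)$ for $K\subset E$ is trivial, so the chain of inequalities is clean and no delicate argument about truncating or extending paths is required here. (If one instead wanted an identity rather than an inequality one would have to worry about such paths, but for the stated inequality monotonicity in the right direction suffices.)

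In summary, the steps in order are: (i) observe $\Gamma(K,G)\subset\Gamma(K)\subset\Gamma(E)$ for compact $K\subset E$; (ii) apply Lemma~\ref{Lcap} to get $\cp_1(K,G)=AM(\Gamma(K,G))\le AM(\Gamma(E))$; (iii) take the supremum over compact $K\subset E$; (iv) apply the Choquet capacitability from Proposition~\ref{p:cap}\eqref{i:Choquet} to conclude $\cp_1(E,G)=\sup_K\cp_1(K,G)\le AM(\Gamma(E))$. The argument uses only results stated earlier in the excerpt, and the monotonicity of the $AM$--modulus under inclusion of path families, which is immediate from the definition since enlarging the path family only adds constraints to admissible sequences.
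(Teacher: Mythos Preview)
Your proposal is correct and follows essentially the same route as the paper's proof: both use Proposition~\ref{p:cap}\eqref{i:Choquet} to reduce to compact $K\subset E$, then apply Lemma~\ref{Lcap} together with the inclusion $\Gamma(K,G)\subset\Gamma(E)$ and monotonicity of $AM$. The only cosmetic difference is that the paper phrases the capacitability step via an increasing sequence $K_1\subset K_2\subset\dots$ with $\cp_1(E,G)=\lim_i\cp_1(K_i,G)$, whereas you take the supremum over all compact $K$; these are equivalent here.
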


\begin{proof}
Since $E$ is a Suslin set, Proposition \ref{p:cap}(f) implies that there are compact sets
$K_1 \subset K_2 \subset \, ... \, \subset E$ such that
$\cp_1(E,G) = \lim_i \cp_1(K_i,G)$. Now by Lemma \ref{Lcap} 
$$\cp_1(K_i,G) = AM(\Gamma(K_i,G)) \leq AM(\Gamma(E))$$
because $\Gamma(K_i,G) \subset \Gamma(E)$.
\end{proof}

\begin{lemma} \label{Linc}
Let $K_1 \subset K_2 \subset \, ... \, $ be compact sets in G with
\begin{equation} \label{Ecapfin}
\lim_{i \rightarrow \infty}\cp_1(K_i,G) < \infty.
\end{equation}
Then there is a $BV$ function $w$ in $X$ such that $w = 0$ in $X \setminus G$,
$w =1$ on $\bigcup_i K_i$, $0 \leq w \leq 1$ and 
\begin{equation} \label{Ecapfin1}
V(w,X) \leq \lim_{i \rightarrow \infty} \cp_1(K_i,G).
\end{equation}
\end{lemma}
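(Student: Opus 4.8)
\medskip

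The plan is to produce $w$ as an $L^1$-limit of Lipschitz test functions for the capacities $\cp_1(K_i,G)$, together with a lower-semicontinuity argument for the variation. First I would fix $\ep>0$ and, for each $i$, use the definition \eqref{capcomp} of $\cp_1(K_i,G)$ to pick $u_i\in\Lip_0(K_i,G)$ with $0\le u_i\le1$, $u_i=1$ on $K_i$, and
\[
\int_G|\nabla u_i|\,d\mu \le \cp_1(K_i,G)+\ep.
\]
The trouble with the raw sequence $(u_i)$ is that it need not be monotone and need not converge. The standard fix is to pass to truncations of partial maxima: replace $u_i$ by $v_i:=\max(u_1,\dots,u_i)$. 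Each $v_i$ is still in $\Lip_0(K_i,G)$ (it has compact support in $G$, equals $1$ on $K_i\supset K_1\cup\dots\cup K_{i-1}$, and lies between $0$ and $1$), the sequence $(v_i)$ is nondecreasing, and the sublinearity of the local Lipschitz constant under $\max$ — more precisely the set-additivity $\int_G|\nabla\max(f,g)| \le \int_{\{f\ge g\}}|\nabla f| + \int_{\{f<g\}}|\nabla g|$, which for Lipschitz functions reduces to the a.e.\ identity $|\nabla\max(f,g)|=|\nabla f|$ on $\{f>g\}$ and $=|\nabla g|$ on $\{f<g\}$ — combined with the submodularity estimate in Proposition \ref{p:cap}\eqref{i:strong}, lets me control $\int_G|\nabla v_i|\,d\mu$ by $\cp_1(K_i,G)+C\ep$ via a telescoping/induction argument on $i$; alternatively one can bypass \eqref{i:strong} entirely by a direct induction showing $\int_G|\nabla v_i|\,d\mu\le\sum_{k=1}^i\bigl(\cp_1(K_k,G)-\cp_1(K_{k-1},G)\bigr)+i\ep$ is \emph{not} good enough because of the $i\ep$ term, so the submodular route (or, cleanest, the coarea-type max estimate) is the one to use, giving a uniform bound $\int_G|\nabla v_i|\,d\mu\le \cp_1(K_i,G)+\ep$ in the end.

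\medskip

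Next, since $0\le v_i\le1$ increases pointwise, $w:=\lim_i v_i=\sup_i v_i$ exists, satisfies $0\le w\le1$, $w=1$ on $\bigcup_i K_i$ (as $v_i=1$ on $K_i$), and $w=0$ on $X\setminus G$ (as each $v_i$ is supported in $G$); by bounded convergence (the $v_i$ are uniformly bounded and $\mu$ is locally finite, and $G$ is bounded hence $\mu(G)<\infty$) we get $v_i\to w$ in $L^1(X)$, in particular in $L^1_{\loc}(X)$. Now I invoke the definition of $V(w,X)$: the sequence $(v_i)$ of locally Lipschitz functions converges to $w$ in $L^1_{\loc}(X)$, so
\[
V(w,X)\le\liminf_{i\to\infty}\int_X|\nabla v_i|\,d\mu
=\liminf_{i\to\infty}\int_G|\nabla v_i|\,d\mu
\le \liminf_{i\to\infty}\cp_1(K_i,G)+\ep
=\lim_{i\to\infty}\cp_1(K_i,G)+\ep,
\]
using \eqref{Ecapfin} for the finiteness of the limit. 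Letting $\ep\to0$ gives $V(w,X)\le\lim_i\cp_1(K_i,G)<\infty$, so $w\in BV(X)$ and \eqref{Ecapfin1} holds.

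\medskip

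I expect the main obstacle to be the uniform variation bound for the monotone sequence $(v_i)$: one must avoid accumulating the $\ep$-errors over the $i$ steps. The clean way is to choose the errors summably, i.e.\ take $\int_G|\nabla u_i|\,d\mu\le\cp_1(K_i,G)+\ep2^{-i}$, and then run the max-construction so that the max identity for $|\nabla\cdot|$ (valid a.e.\ for Lipschitz functions) yields $\int_G|\nabla v_i|\,d\mu\le\int_G|\nabla v_{i-1}|\,d\mu+\int_{\{u_i>v_{i-1}\}}|\nabla u_i|\,d\mu$, and the latter integral is bounded, via $\{u_i>v_{i-1}\}\subset\{u_i>u_k\}$-type comparisons and submodularity, by $\cp_1(K_i,G)-\cp_1(K_{i-1},G)+\ep2^{-i}$; telescoping gives the uniform bound $\cp_1(K_i,G)+\ep$. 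A secondary point to be careful about is that "$|\nabla\cdot|$" here is the \emph{local Lipschitz constant} as defined in the Preliminaries, not an abstract upper gradient, so all the max-manipulations should be justified at the level of the pointwise definition of $|\nabla u(x)|$ for Lipschitz $u$ (where $|\nabla u|$ coincides $\mu$-a.e.\ with the pointwise Lipschitz constant and the calculus rules are elementary), rather than quoted from Sobolev-space references.
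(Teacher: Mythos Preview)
Your approach is valid but takes a genuinely different route from the paper's. The paper gives a three-line proof: choose near-optimal $u_i\in\Lip_0(K_i,G)$ with $0\le u_i\le 1$ and $\int_G|\nabla u_i|\,d\mu\le\cp_1(K_i,G)+1/i$, then invoke the compact embedding $BV\hookrightarrow L^1_{\loc}$ (\cite[Theorem 3.7]{Mi}) to extract a subsequence converging in $L^1_{\loc}$ and $\mu$-a.e.\ to a limit $w$; since $u_i(x)=1$ eventually for every $x\in\bigcup_j K_j$, one can take $w=1$ there, and lower semicontinuity of $V$ gives \eqref{Ecapfin1}. Your monotone $\max$-construction avoids the compactness black box and yields $w=1$ pointwise on $\bigcup_i K_i$ for free, which is slightly cleaner; the price is the telescoping variation estimate.

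One point to tighten: the crucial step is not Proposition~\ref{p:cap}\eqref{i:strong} (set-level submodularity) but the function-level inequality
\[
\int_G|\nabla\max(f,g)|\,d\mu+\int_G|\nabla\min(f,g)|\,d\mu\le\int_G|\nabla f|\,d\mu+\int_G|\nabla g|\,d\mu,
\]
combined with the observation that $\min(v_{i-1},u_i)\in\Lip_0(K_{i-1},G)$ so that $\int_G|\nabla\min(v_{i-1},u_i)|\,d\mu\ge\cp_1(K_{i-1},G)$; this is what makes the telescoping work with summable errors $\ep2^{-i}$. You should state this explicitly. Be aware that for the local Lipschitz constant as defined here the pointwise identity $|\nabla\max|+|\nabla\min|=|\nabla f|+|\nabla g|$ is immediate on the open set $\{f\ne g\}$ but can be delicate on $\{f=g\}$ in a general metric space; the safest route under assumptions~(A) is to pass to minimal weak upper gradients (where the $\max$/$\min$ calculus is standard, cf.\ \cite{BB}) via the identity $\cp_1=\widetilde{\cp}_1$ on compacta from Proposition~\ref{p:cap}\eqref{i:tilde}, rather than arguing directly with the pointwise definition of $|\nabla u|$.
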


\begin{proof}
For each $i$ pick $u_i \in \Lip_0(K_i,G)$ such that $0\le u_i\le 1$ 
and 
$$ 
\int_G |\nabla u_i|\, d\mu \leq \cp_1(K_i,G) + 1/i.
$$
By the compact embedding of $BV$ into $L^1_{\loc}$, see  \cite[Theorem 
3.7]{Mi}, there is a limit function $w$ and a subsequence $(v_i)_i$ of $(u_i)_i$ such that
$v_i\to w$ in $L_{\loc}^1(X)$ and $\mu$-a.e. In particular, we can assume that $w=1$ 
on $\bigcup_i K_i$ and \eqref{Ecapfin1} holds.
\end{proof}

We recall some measure theoretic notation. Let $E \subset X$ be a ($\mu$--) measurable set. The \textit{measure
theoretic boundary} $\partial_*E$ of $E$ consists of points
$x \in X$ such that
$ \Theta(x,E) > 0$
and $\Theta(x,X \setminus E) > 0$
where
$$\Theta(x,A) = \limsup_{r \rightarrow 0}\frac{\mu(B(x,r) \cap A)}{\mu(B(x,r))} $$
is the upper $\mu$--density of $A$ at $x$.
 The \textit{measure
theoretic interior} $\inte E$ 
and the \textit{measure theoretic exterior} $\exte E$ of $E$
are the sets
of points $x \in  X$ where $\Theta(x,X \setminus E) =0$ and
$\Theta(x,E)=0$, respectively. 
The sets $\partial_*E$, $\inte E$ and $\exte E$ are Borel sets.

For an open bounded set $G \neq X$ and $E \subset G$ we define the \textit{perimeter capacity} of $E$ in 
$G$ as 
$$\Cp(E,G) = \inf\Big\{P(F,X) \colon E \subset \inte F, \,F \subset G \textnormal{ measurable} \Big\}.$$
Note that the perimeter of $F$ is relative to $X$ and not relative to $G$.

\begin{lemma}\label{cap=Cap}
If $E$ is a Suslin set in $G \subset X$ and $AM(\Gamma(E)) < \infty$, then 
\begin{equation} \label{ECc1}
 \Cp(E,G) \le \,\cp_1(E,G).
\end{equation}
\end{lemma}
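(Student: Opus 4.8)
The plan is to reduce the Suslin case to the compact case via the capacitability results already in hand, and then to handle compact $K\subset G$ by producing from a near‑optimal test function $u\in\Lip_0(K,G)$ a measurable set $F$ with $E\subset\inte F\subset F\subset G$ and $P(F,X)$ close to $\cp_1(K,G)$. First I would observe that we may assume $\cp_1(E,G)<\infty$, since otherwise there is nothing to prove. By Lemma \ref{LSus} we have $\cp_1(E,G)\le AM(\Gamma(E))<\infty$, and since $E$ is Suslin, Proposition \ref{p:cap}\eqref{i:Choquet} gives compact sets $K_1\subset K_2\subset\cdots\subset E$ with $\cp_1(K_i,G)\to\cp_1(E,G)$. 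If for each such $K_i$ I can find a measurable $F_i\subset G$ with $K_i\subset\inte F_i$ and $P(F_i,X)\le\cp_1(K_i,G)+\tfrac1i$, that does not immediately finish the job because $E$ need not be contained in any single $\inte F_i$; so instead I would aim directly at the union, using Lemma \ref{Linc}.

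The main step is therefore: apply Lemma \ref{Linc} to the increasing sequence $(K_i)$, whose capacities are bounded by \eqref{Ecapfin} because $\lim_i\cp_1(K_i,G)=\cp_1(E,G)<\infty$. This yields a $BV$ function $w$ on $X$ with $w=0$ on $X\setminus G$, $w=1$ on $\bigcup_i K_i\supset E$, $0\le w\le1$, and $V(w,X)\le\cp_1(E,G)$. Next I would pass from the function $w$ to a set by the coarea inequality for $BV$ functions: for a.e. level $t\in(0,1)$ the superlevel set $F_t=\{w>t\}$ has finite perimeter, and $\int_0^1 P(F_t,X)\,dt\le V(w,X)$ (the coarea formula, valid in the metric $BV$ setting, e.g.\ from \cite{Mi}). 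Hence there exists $t\in(0,1)$ with $P(F_t,X)\le V(w,X)\le\cp_1(E,G)$. It remains to check that $F_t$ is an admissible competitor for $\Cp(E,G)$: clearly $F_t\subset\{w>0\}\subset G$ (since $w=0$ off $G$ and $G$ is open, actually $\overline{F_t}\subset G$ up to the usual care), and because $w\equiv1$ on $E$, every point of $E$ has $w>t$ on a full‑measure neighbourhood, i.e.\ $\Theta(x,X\setminus F_t)=0$, so $E\subset\inte F_t$. Therefore $\Cp(E,G)\le P(F_t,X)\le\cp_1(E,G)$, which is \eqref{ECc1}.

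The point requiring the most care is the verification that $E\subset\inte F_t$ rather than merely $E\subset F_t$ up to measure zero, and the accompanying containment of $F_t$ (or its measure‑theoretic closure) inside $G$. For the interior: since $w=1$ $\mu$‑a.e.\ on $\bigcup_iK_i$ and $w\le1$ everywhere, for any $t<1$ the set $\{w\le t\}$ is $\mu$‑null on a neighbourhood structure around each point of $\bigcup_iK_i$; more precisely $\bigcup_iK_i\subset\{w=1\}\subset\inte\{w>t\}$ because $\{w=1\}$ has zero‑density complement in $F_t$. One must be slightly careful that Lemma \ref{Linc} delivers $w=1$ \emph{pointwise} on $\bigcup_iK_i$ (its statement does say $w=1$ there), so no redefinition on a null set is needed. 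For the containment in $G$: $w$ is obtained as an $L^1_{\loc}$‑limit of functions in $\Lip_0(K_i,G)$, each supported in a compact subset of $G$; this forces $w=0$ a.e.\ outside $G$, hence $\exte F_t\supset X\setminus G$ and $F_t\subset G$ in the measure‑theoretic sense, which is all the definition of $\Cp(E,G)$ requires. I expect this bookkeeping — matching the pointwise/a.e.\ statements of Lemma \ref{Linc} with the measure‑theoretic interior in the definition of $\Cp$ — to be the only real obstacle; the analytic content is entirely carried by Lemma \ref{Linc} and the coarea inequality.
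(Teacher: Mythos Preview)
Your argument has a genuine gap at precisely the point you flagged as delicate, and the gap is not just bookkeeping. The compact sets $K_i$ produced by Proposition~\ref{p:cap}\eqref{i:Choquet} satisfy $K_i\subset E$, so $\bigcup_i K_i\subset E$, not $\supset E$ as you wrote. Lemma~\ref{Linc} therefore only gives $w=1$ on $\bigcup_i K_i$, which may be a proper subset of $E$. Worse, even if you had $w=1$ pointwise on all of $E$, this would \emph{not} imply $E\subset\inte\{w>t\}$: since $AM(\Gamma(E))<\infty$, Lemma~\ref{L=0} gives $\mu(E)=0$, so the values of $w$ on $E$ carry no information whatsoever about the density of $\{w>t\}$ at points of $E$. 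Your sentence ``since $w\equiv1$ on $E$, every point of $E$ has $w>t$ on a full-measure neighbourhood'' is simply false without further input.

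The paper's proof avoids this by approximating $E$ from \emph{outside} rather than from inside: fix an open set $U$ with $E\subset U\subset G$, exhaust $U$ by compact sets $K_i\nearrow U$ (so that $\cp_1(K_i,G)\le\cp_1(U,G)$), and apply Lemma~\ref{Linc} to obtain $w$ with $w=1$ on the \emph{open} set $U$. Now for any $t\in(0,1)$ and any $x\in E\subset U$ one has an open ball about $x$ contained in $U\subset\{w>t\}$, so trivially $E\subset\inte\{w>t\}$. The coarea step then yields $\Cp(E,G)\le\cp_1(U,G)$, and taking the infimum over open $U\supset E$ recovers $\cp_1(E,G)$ by definition. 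Thus the inner Choquet approximation is not used at all here; the approximation that matters is the outer one built into the definition of $\cp_1$.
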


\begin{proof}
Let $U$ be an open set such that $E \subset U \subset G$.
By Lemma \ref{LSus} we have $\cp_1(E,G) < \infty$. 
Next choose compact sets $K_1 \subset K_2 \subset \, ... \subset U$ such that 
$\bigcup_i K_i = U$; now 
$$ \cp_1(K_i,G) \leq \cp_1(U, G)$$ 
for all $i$. 

Let $w$ be the $BV$ function in Lemma \ref{Linc}. Note that $w = 1$ in $U = \bigcup_i K_i$.
By the co--area formula \cite[Proposition 4.2]{Mi}
and Lemma \ref{Linc}
$$\int_0^1P(\{x: w(x) > t \}, X) \,dt \le V(w,X) \leq \lim_i \cp_1(K_i, G) \leq \cp_1(U, G).$$
 Thus there
is some $t \in (0,1)$ such that the set $A =\{x: w(x) > t \}$
has finite perimeter, $\inte A \supset E$ and
$P(A, X) \leq \cp_1(U, G)$. Note that it is possible that $A = G$. Since 
$$\Cp(E,G) \leq P(A,X) \leq  \cp_1(U, G)$$
and this holds for all open sets $U$ with $E \subset U \subset G$ we obtain (\ref{ECc1}).
\end{proof}

\section{$AM(\Gamma(E)) \leq C \, co\H^1(E)$}

Throughout this section we assume that $(X,d)$ and $\mu$ satisfy the assumptions (A).

We need the following auxiliary lemma for the main result. Note that the set $E$ below is an arbitrary subset of $X$.

\begin{lemma}\label{L=0} If $E \subset X$ and $AM(\Gamma(E)) < \infty$, then $\mu(E) = 0$.
\end{lemma}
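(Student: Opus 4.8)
The plan is to argue by contradiction: suppose $\mu(E) > 0$. The intuition is that a set of positive measure is "thick" enough that it must be met by a family of paths of positive $AM$-modulus in a quantitatively robust way, and in fact one can show $AM(\Gamma(E)) = \infty$. The key mechanism is that an admissible sequence $(\rho_i)$ for $\Gamma(E)$ must, in the liminf sense, integrate to at least $1$ along a great many short paths passing through points of $E$, and the density of $E$ forces the $L^1$-norms of the $\rho_i$ to blow up.

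First I would reduce to a bounded piece of $E$ of positive measure with, say, positive lower density: by the Lebesgue density theorem in doubling spaces, $\mu$-a.e. point of $E$ is a density point, so after intersecting with a ball we may assume $E \subset B(x_0, R)$, $\mu(E) > 0$, and (by Egorov-type reasoning) that there is $\lambda > 0$ and $r_0 > 0$ with $\mu(B(x,r) \cap E) \ge \lambda\, \mu(B(x,r))$ for all $x$ in a positive-measure subset $E'$ of $E$ and all $r < r_0$. Next, let $(\rho_i)$ be any admissible sequence for $\Gamma(E) \supset \Gamma(E')$. The crucial step is a Fubini/covering argument: for each small scale $r$, cover $E'$ by a bounded-overlap family of balls $B(x_j, r)$; through each such ball there are "many" unit-speed paths of length $\sim r$ hitting $E'$, and for $i$ large each such path $\gamma$ satisfies $\int_\gamma \rho_i\, ds$ close to $\ge 1$, hence roughly $\vint_\gamma \rho_i\, ds \gtrsim 1/r$. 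Integrating over a suitable measure on such paths — concretely, using that $X$ supports a Poincaré inequality one can instead phrase this via the connection to capacity: for a compact $K \subset E'$ of positive measure, Theorem \ref{Tupper}'s reasoning cannot be used for a lower bound, so instead I would use Lemma \ref{Lcap} together with the fact that $\Gamma(E')$ contains $\Gamma(K, G)$ for $G$ a large ball, giving $AM(\Gamma(E')) \ge \cp_1(K,G)$. But $\cp_1(K,G) = \widetilde{\cp}_1(K,G) > 0$ whenever $\mu(K) > 0$, since any competitor $u$ with $u \ge 1$ on $K$ has $\int_G g_u\, d\mu$ bounded below by a Poincaré/Sobolev inequality in terms of $\mu(K) > 0$; in fact this forces a contradiction only if we can make $\mu(K)$ large while keeping $K$ compact, which we can by inner regularity, and then let $G$ shrink.

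Actually the cleanest route: if $\mu(E) > 0$, pick a compact $K \subset E$ with $\mu(K) > 0$ and a bounded open $G \supset K$ with $\mu(G) < 2\mu(K)$ (possible by outer regularity of $\mu$ and choosing $G$ wisely, or iterating). Then $AM(\Gamma(E)) \ge AM(\Gamma(K,G)) = \cp_1(K,G) = \widetilde{\cp}_1(K,G)$ by Lemma \ref{Lcap}. Now for any $u \in N^{1,1}_0(G)$ with $u \ge 1$ on $K$, extend by $0$; by the $(1,1)$-Poincaré inequality on a ball $B \supset G$ one gets $\mu(K) \le \int_B |u - u_B| + \ldots \le C r_B \int_B g_u\, d\mu$ up to controlling $u_B$ via $\mu(G)/\mu(B)$ being small — so choosing $B$ huge makes $u_B$ negligible and yields $\int_G g_u\, d\mu \ge c\, \mu(K)/r_B$. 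This only gives a positive, not infinite, lower bound; to reach $\infty$ one iterates: decompose $E$ into countably many disjoint positive-measure pieces, apply the bound to each with disjoint supports, and sum, using that $AM(\Gamma(\bigcup E_k))$ dominates (up to the metric outer measure structure) a divergent series. The main obstacle is making this last blow-up step rigorous — controlling the interaction of admissible sequences across the disjoint pieces — which is where one must invoke that $E \mapsto AM(\Gamma(E))$ is a (metric outer) measure, so countable subadditivity runs the wrong way, and instead one should localize: if $\mu(E) > 0$ then some ball $B$ has $\mu(E \cap B) > (1-\ep)\mu(B)$, forcing $\cp_1(\overline{E \cap B'}, B) $ to be comparable to $\cp_1(\text{a.e. all of } B', B)$ which is bounded below independently of how we shrink, contradicting finiteness only after a scaling argument. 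Thus the heart of the proof is the quantitative capacity lower bound $\widetilde{\cp}_1(K,G) \gtrsim \mu(K)/\diam(G)$ via Poincaré, combined with a scaling/exhaustion argument to upgrade "positive" to "infinite."
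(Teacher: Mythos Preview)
You have correctly identified the core local estimate: for a compact set $K$ contained in a ball $B$ of radius $r$, the Poincar\'e inequality for $N^{1,1}_0(B)$-functions combined with Lemma~\ref{Lcap} gives $\mu(K)\le C\,r\,\cp_1(K,B)=C\,r\,AM(\Gamma(K,B))$. This is exactly the paper's engine. The gap is in the globalization step, where your plan does not close.

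Your attempt to reach a contradiction by showing $AM(\Gamma(E))=\infty$ runs into the problem you yourself name: for a decomposition $E=\bigcup E_k$ into disjoint pieces, the outer-measure subadditivity of $E\mapsto AM(\Gamma(E))$ gives $AM(\Gamma(E))\le\sum_k AM(\Gamma(E_k))$, which is the wrong direction. You then gesture at a ``scaling/exhaustion argument'' but never say what it is; in particular, from the single estimate $AM(\Gamma(E))\ge \cp_1(K,G)\gtrsim \mu(K)/\diam(G)$ you cannot let $\diam(G)\to 0$ while keeping $\mu(K)$ bounded below, since $K\subset G$. The paper resolves this not by contradiction but by a direct bound: it uses a Vitali covering of $E$ by \emph{disjoint} closed balls $\overline{B}_i=\overline{B}(x_i,r_i)$ with all $r_i<\varepsilon$, applies the local estimate in each ball to get $\mu(E\cap B_i)\le C\,r_i\,AM(\Gamma(E\cap B_i,B_i))$, and then sums. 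The key point you are missing is that the connecting families $\Gamma(E\cap B_i,B_i)$ can be taken to consist of paths with loci in the \emph{disjoint} sets $\overline{B}_i$ (every such path has a subpath in $\overline{B}_i$ still joining $E\cap B_i$ to $\partial B_i$), and for path families living in pairwise disjoint sets one has the \emph{superadditivity} $\sum_i AM(\Gamma_i)\le AM(\bigcup_i\Gamma_i)\le AM(\Gamma(E))$. This yields $\mu(E)\le C\varepsilon\,AM(\Gamma(E))$ for every $\varepsilon>0$, hence $\mu(E)=0$. The covering at an arbitrarily small scale together with the disjointness of the connecting families is the idea your plan lacks.
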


\begin{proof}
By \cite[Theorem 2]{HEMM2} there is a co--Suslin set $E' \supset E$ such that $AM(\Gamma(E')) = AM(\Gamma(E)$. 
Since co--Suslin sets are $\mu$--measurable we may assume that $E$ is $\mu$ measurable and since we can also assume that $E$ is bounded, it suffices to prove the lemma in the case $\mu(E) < \infty$.

Let $\varepsilon > 0$. Since $\mu(\overline{B}(x,r) \setminus B(x,r))=0$ except for a countable set of $r>0$ we find by the Vitali covering theorem disjoint
balls $\overline{B}(x_i, r_i)$ such that
$r_ i < \varepsilon$ and $\bigcup_i B(x_i, r_i)\supset E \setminus E_0$ where $\mu(E_0) = 0$. Now we can replace $E$ by $E \setminus E_0$ which we continue to denote by $E$.

Fix $B_i = B(x_i, r_i)$ and let $K \subset E \cap B_i$ be compact. For $\delta > 0$ pick $u \in N^{1,1}_0(B_i)$ such that $u = 1$
on $K$, $0 \leq u \leq 1$ and
$$\int_{B_i}g_u \, d\mu < \cp_1(K,B_i) + \delta.$$
By the Poincar\'e inequality \cite[Theorem 5.51]{BB} for $N^{1,1}_0(B_i)$--functions there is a 
constant $C$ depending only on $C_P$ and $C_{\mu}$ so that
$$\mu(K) \leq \int_{B_i}u \, d\mu \leq C r_i 
\int_{B_i} g_u \, d\mu <
C r_i  (\cp_1(K,B_i) + \delta)$$
and letting $\delta \rightarrow 0$ we obtain 
from Lemma \ref{Lcap}
$$\mu(K) \leq C r_i  AM(\Gamma(K,B_i)) \leq
 Cr_i AM(\Gamma(E\cap B_i,B_i)). $$
Since this holds for all compact sets $K \subset E\cap B_i $
$$\mu(E\cap B_i) \leq C r_i \, AM(\Gamma(E\cap B_i,B_i)).$$
The path families $\Gamma(E\cap B_i,B_i)$
lie in the disjoint sets $\overline{B}_i$ and
are subfamilies of $\Gamma(E)$. Summing over $i$ we obtain
$$ \mu(E) = \sum_i \mu(E\cap B_i) \leq C \sum_i
r_i \, AM (\Gamma(E\cap B_i,B_i)) \leq 
C \varepsilon \, AM(\Gamma(E)),$$
and $\varepsilon \rightarrow 0$ completes the proof.
\end{proof}

The comparison of the $BV$ capacity with the $(n-1)$--dimensional 
Hausdorff content is due to Fleming \cite{Fle}. It has been 
generalized to the framework of metric measure spaces by  Kinnunen, 
Korte, Shanmugalingam and Tuominen \cite{KKST1}. Here we need a 
version for the $\delta$-Hausdorff content related to the $co\H^1$--measure.

\begin{lemma}
\label{Lnull}
Let $M$ be a bounded open set in $X$.
For $\delta>0$ there exists
$\alpha>0$ such that for each 
open set
$G$ with $\mu(G)<\alpha$ and $E\subset G\subset M$
we have
\begin{equation} \label{ELnull}
co\H^1_{\delta}(E) \le C\, \Cap(E,G),
\end{equation}
where $C$ depends only on $C_P$, $\lambda_P$ and $C_{\mu}$.
\end{lemma}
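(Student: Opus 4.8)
The plan is to reduce \eqref{ELnull} to a relative isoperimetric-type inequality for sets of finite perimeter, combined with a dimension-reduction (``relative'' to the small ambient set $G$) argument that produces a genuine covering of $E$ by balls of radius less than $\delta$. First I would unwind the definition of $\Cap(E,G)$: given $\ep>0$, pick a measurable set $F$ with $E\subset\inte F$, $F\subset G$ and $P(F,X)\le \Cap(E,G)+\ep$. Since $\mu(F)\le\mu(G)<\alpha$ is small, the relative isoperimetric inequality coming from the $BV$--Poincar\'e inequality \eqref{EPBV} should force, in each ball $B(x,r)$ that is ``mostly outside $F$'', a quantitative lower bound relating $\mu(B(x,r)\cap F)/\mu(B(x,r))$ to $r\,P(F,\lambda_P B(x,r))/\mu(B(x,r))$. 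The density threshold $\frac12$ (or any fixed $\theta\in(0,1)$) will be used to distinguish ``inside'' and ``outside'' balls. Because $\mu(F)<\alpha$, no ball of radius comparable to $\diam M$ can have $F$-density close to $1$; so for a suitable choice of $\alpha=\alpha(\delta)$ every point of $\inte F$ admits arbitrarily small radii $r<\delta$ at which the density of $F$ crosses the threshold.

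The second step is the covering/summation argument, following Fleming \cite{Fle} and its metric version \cite{KKST1}. For $x\in E\subset\inte F$, choose by the above a radius $r_x<\delta$ with $\mu(B(x,r_x)\cap F)=\theta\,\mu(B(x,r_x))$ (intermediate value, using continuity of $r\mapsto\mu(B(x,r)\cap F)$ after discarding the at most countably many bad radii), and such that the isoperimetric inequality gives
\[
\frac{\mu(B(x,r_x))}{r_x}\le C\,P\bigl(F,B(x,\lambda_P r_x)\bigr),
\]
with $C=C(C_P,\lambda_P,C_\mu)$. Apply the $5r$--covering lemma to $\{B(x,r_x)\}_{x\in E}$ to extract a countable disjoint subfamily $\{B(x_j,r_j)\}$ with $E\subset\bigcup_j B(x_j,5r_j)$ and $5r_j<5\delta$; using doubling to pass from $B(x_j,5r_j)$ to $B(x_j,r_j)$ in the $co\H^1_{5\delta}$--sum, and disjointness of the enlarged balls $B(x_j,\lambda_P r_j)$ up to bounded overlap (again $5r$-lemma, adjusting constants), we get
\[
co\H^1_{5\delta}(E)\le\sum_j\frac{\mu(B(x_j,5r_j))}{5r_j}
\le C\sum_j\frac{\mu(B(x_j,r_j))}{r_j}
\le C\sum_j P\bigl(F,B(x_j,\lambda_P r_j)\bigr)\le C\,P(F,X),
\]
hence $co\H^1_{5\delta}(E)\le C(\Cap(E,G)+\ep)$. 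Since this must hold for all $\delta$ in the hypothesis, I would run the argument with $\delta/5$ in place of $\delta$ from the start (i.e. choose $r_x<\delta/5$), so that the conclusion comes out with $co\H^1_\delta(E)$; then let $\ep\to0$.

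The main obstacle is making the first step — the ``density crosses the threshold at small scale'' claim — uniform and honest: one must guarantee that for \emph{every} $x\in\inte F$ there is a radius below $\delta$ (not merely below $\diam M$) at which $\mu(B(x,r)\cap F)/\mu(B(x,r))$ equals $\theta$, and simultaneously the isoperimetric inequality is effective there. This is where the smallness of $\mu(G)$ enters decisively: if $\alpha$ is chosen small enough compared to $\inf\{\mu(B(x,\delta)):x\in M\}$ (which is positive by doubling, completeness/properness, and boundedness of $M$), then $\mu(B(x,\delta)\cap F)\le\mu(G)<\alpha$ forces the density at radius $\delta$ to be below $\theta$, while it tends to $1$ as $r\to0$ by the Lebesgue density theorem (valid since $\mu$ is doubling); continuity then yields the crossing radius $r_x\in(0,\delta)$. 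One also needs to know the crossing radius can be taken to avoid the countably many radii where $r\mapsto\mu(\partial B(x,r)\cap F)>0$, and that at a crossing radius the relative isoperimetric inequality indeed gives the displayed lower bound on $\mu(B(x,r_x))/r_x$; this last point is the standard consequence of \eqref{EPBV} applied to $\chi_F$ in $B(x,r_x)$ together with $\theta\le\frac12\le 1-\theta$ after fixing $\theta=\frac12$. The remaining bounded-overlap bookkeeping for the enlarged balls $B(x_j,\lambda_P r_j)$ is routine via doubling and the $5r$--covering lemma.
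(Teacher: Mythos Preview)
Your plan is essentially the paper's proof: pick a competitor $F$ for $\Cap(E,G)$, use smallness of $\mu(G)$ relative to $\inf_{x\in M}\mu(B(x,\delta'))$ to force the $F$-density in balls of radius $\delta'$ below a fixed threshold, find for each $x\in E\subset\inte F$ a transitional radius $r_x<\delta'$, apply the $BV$--Poincar\'e inequality to $\chi_F$ at that scale to get $\mu(B(x,r_x))/r_x\le C\,P(F,B(x,\lambda_P r_x))$, and finish with the $5r$-covering lemma. Two places where the paper's execution is cleaner than your sketch: it defines $r_x$ as the \emph{infimum} of radii where the density drops below $\tfrac{1}{2C_P}$ (so no continuity or intermediate-value fuss---the two-sided density bounds are recovered by a one-line limit argument), and it applies the $5r$-lemma directly to the enlarged family $\{B(x,\lambda_P r_x)\}_{x\in E}$, so the extracted disjoint subfamily already consists of the balls that carry $P(F,\cdot)$ and the final sum is immediate without any bounded-overlap bookkeeping.
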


\begin{proof} 
We write for $C$ a generic constant which depends only on $C_P$, $\lambda_P$ and $C_{\mu}$.

Set $\delta' = \delta/(5 \lambda_P)$ and $\kappa = 4 C_P$.  
Let $G$ be a bounded open set such that $E \subset G \subset M$. We find
$\alpha > 0$ such that for each $x \in G$
\begin{equation} \label{EG1}
\mu(B(x,\delta') \cap G) \leq \frac{1}{\kappa}
\mu(B(x, \delta'))
\end{equation}
provided that
$\mu(G) < \alpha$. Suppose that no such  $\alpha$ exists. Then there are
open sets $G_i$ and $x_i \in G_i$ such that $E \subset G_i \subset M$ and
$$
\frac{1}{i}> \mu(G_i) \geq \mu(B(x_i,\delta') \cap G_i) > \frac{1}{\kappa}\mu(B(x_i, \delta'))
$$
but because each $x_i$ belongs to a fixed bounded set $M$, $\mu(B(x_i, \delta')) > c > 0$ which
 leads to contradiction.
 
Fix $G$ as above. To prove (\ref{ELnull}) we may assume that $\Cap(E,G) < \infty$ and
for $\varepsilon > 0$ we choose a competitor $F \subset G$ for $\Cp(E,G)$ with
$P(F,X) \leq \Cp(E,G) + \varepsilon$. Let $x \in E$, $B(r) = B(x,r)$ and define
$$r_x = \inf \{r > 0: \, \mu(F \cap B(r)) \leq
 \frac{1}{2C_P} \mu(B(r) \}.$$
 Now $0 < r_x < \delta'$ because
$$ \lim_{r \rightarrow 0} \frac{\mu(F \cap B(r))}{\mu(B(r))} = 1$$
and by (\ref{EG1})
$$\mu(F \cap B(\delta')) \leq \mu(G \cap B(\delta') )
\leq \frac{1}{4C_P}\mu(B(\delta')) <
\frac{1}{2C_P}\mu(B(\delta')).$$

Let $r < r_x$. Then
$$\mu(F \cap B(r_x)) \geq \mu(F  \cap B(r))
> \frac{1}{2C_P} \mu(B(r))$$
and letting $r \rightarrow r_x$ we obtain
\begin{equation} \label{EG2}
\mu(F \cap B(r_x)) \geq \frac{1}{2C_P} \mu(B(r_x)).
\end{equation}
On the other hand we show that
\begin{equation} \label{EG3}
\mu(F \cap B(r_x)) \leq \frac{1}{2} \mu(B(r_x)).
\end{equation}
If $\mu(F \cap B(r_x)) \leq  \mu(B(r_x))/(2C_P)$, then equality holds 
in (\ref{EG2}) and (\ref{EG3}) is immediate. If
$$\mu(F \cap B(r_x)) > \frac{1}{2C_P} \mu(B(r_x))$$
then by the definition of $r_x$ there is
$r \in (r_x, 2r_x)$ such that
$$\mu(F \cap B(r_x)) \leq \mu(F \cap B(r)) \leq \frac{1}{2C_P} \mu(B(r)) \leq \frac{1}{2} \mu(B(r_x)).$$

Next we use the $BV$--Poincar\'e inequality (\ref{EPBV}) for the $BV$ function $\chi_F$ in $B(r_x)$. 
By (\ref{EG2}) and (\ref{EG3})
$$\frac{1}{2C_{\mu}} \leq (\chi_F)_{B(r_x)} = \frac{\mu(F \cap B(r_x))}{\mu(B(r_x))} \leq \frac{1}{2} $$
and we obtain
$$
\aligned
\frac{\mu(B(r_x))}{4 C_{\mu}}& \leq 
\frac{ \mu(F \cap B(r_x))}{2} \leq \int_{F \cap B(r_x)} (1-  (\chi_F)_{B(r_x)} ) \, d\mu
\\
&\leq \int_{B(r_x)} |(\chi_F-  (\chi_F)_{B(r_x)} | \, d\mu \leq C_P \, r_x P(  F, B(\lambda_P \, r_x))
\endaligned$$
and so
\begin{equation} \label{EG4}
\frac{\mu(B(r_x))}{r_x} \leq C \, P(  F, B(\lambda_P r_x)).
\end{equation}
By the $5$--covering lemma we find balls $B_j = B(x_j, \lambda_P r_{x_j})$ from 
the collection $\{ B(x,\lambda_P r_x) \}$ so that 
the balls $B_j$ are disjoint and the balls $5B_j = B(x_j, 5\lambda_P r_{x_j})$ cover
$E$. Set $D = \bigcup_j 5 \,B_j$. Since $ 5 \lambda_P r_{x_j} < 5\, \lambda_P \delta' = \delta$ 
we obtain from (\ref{EG4})
$$ co\H^1_{\delta}(E)  \leq
\sum_j \frac{\mu(5B_j)}{5r_{x_j}} \leq
C \sum_j \frac{\mu(B(x_j, r_{x_j}))}{r_{x_j}}$$
$$\leq C \, \sum_j P(F, B_j)
\leq  C \, P(F, X) \leq C (\Cap(E,G) + \varepsilon)$$
where the doubling property of $\mu$  and the fact that the balls $B_j$ are disjoint have also been used.
Letting $\varepsilon \rightarrow 0$ we complete the proof.
\end{proof}

The following lemma combines the achieved results.

\begin{lemma} \label{Lcomp}
Suppose that $E \subset X$ is a bounded Suslin set such that $AM(\Gamma(E)) < \infty$. Then
\begin{equation} \label{Ecompp}
co\H^1(E) \le C \, AM(\Gamma(E)) 
\end{equation}
where the constant $C$ depends only on $C_P, \, \lambda_P$ and $C_{\mu}$.
\end{lemma}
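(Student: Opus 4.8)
The plan is to read off Lemma~\ref{Lcomp} from the lemmas already established in this and the preceding section, chaining the estimates
$AM(\Gamma(E))\ge\cp_1(E,G)\ge\Cp(E,G)\ge c\,co\H^1_{\delta}(E)$ and then letting $\delta\to0$. First I fix a bounded open set $M$ with $E\subset M$. The hypothesis $AM(\Gamma(E))<\infty$ forces $\mu(E)=0$ by Lemma~\ref{L=0}; hence for any prescribed $\alpha>0$ we may choose an open set $G$ with $E\subset G$ and $\mu(G)<\alpha$ (this is the same use of $\mu(E)=0$ as in the proof of Theorem~\ref{Tupper}), and after replacing $G$ by $G\cap M$ we may assume $E\subset G\subset M$ with $G$ bounded.

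Now let $\delta>0$ be arbitrary and let $\alpha=\alpha(\delta,M)>0$ be the threshold supplied by Lemma~\ref{Lnull}; choose $G$ as above for this $\alpha$. Since $E$ is Suslin, Lemma~\ref{LSus} gives $\cp_1(E,G)\le AM(\Gamma(E))$, and, since in addition $AM(\Gamma(E))<\infty$, Lemma~\ref{cap=Cap} gives $\Cp(E,G)\le\cp_1(E,G)$. Feeding these into Lemma~\ref{Lnull} (applicable because $\mu(G)<\alpha$ and $E\subset G\subset M$) yields
$$
co\H^1_{\delta}(E)\le C\,\Cp(E,G)\le C\,\cp_1(E,G)\le C\,AM(\Gamma(E)),
$$
where $C$ depends only on $C_P$, $\lambda_P$ and $C_{\mu}$. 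The decisive point is that $C$ does not depend on $\delta$, so passing to the supremum over $\delta>0$ and using $co\H^1(E)=\sup_{\delta>0}co\H^1_{\delta}(E)$ gives \eqref{Ecompp} with the same constant.

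I do not anticipate a genuine obstacle: every link in the chain is already available. The two points that need a word of care are (i) that the Suslin set $E$ can be enclosed in an open set of measure below the prescribed threshold — this is exactly what $\mu(E)=0$ together with outer regularity of the doubling measure delivers — and (ii) that the constant in Lemma~\ref{Lnull} does not degenerate as $\delta\to0$, which is why that lemma was deliberately stated with a $\delta$-independent constant and is the single place where a careless formulation would have destroyed the argument. All the substantial work was done in Lemmas~\ref{L=0}, \ref{Lnull}, \ref{cap=Cap} and \ref{LSus}; the present proof merely harvests them.
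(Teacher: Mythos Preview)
Your proof is correct and follows essentially the same route as the paper's own proof: both invoke Lemma~\ref{L=0} to get $\mu(E)=0$, then chain Lemmas~\ref{Lnull}, \ref{cap=Cap} and \ref{LSus} to bound $co\H^1_{\delta}(E)$ by $C\,AM(\Gamma(E))$ with a $\delta$-independent constant, and finally pass to the supremum over $\delta$. You are in fact slightly more explicit than the paper in fixing the ambient bounded set $M$ first and spelling out the use of outer regularity to produce $G$ with $\mu(G)<\alpha$.
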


\begin{proof}
Lemma \ref{L=0} yields $\mu(E) = 0$. Fix $\delta > 0$ and then, by Lemma \ref{Lnull}, we 
find a bounded open set $G \neq X$ containing $E$ with
$$co\H^1_{\delta}(E)\le C \, \Cp(E,G).$$
Now Lemmata \ref{cap=Cap} and
\ref{LSus} imply
$$
\Cp(E,G) \leq \cp_1(E,G) \leq AM(\Gamma(E))$$
and hence $co\H^1_{\delta}(E) \le C \, AM(\Gamma(E)).$ Passing to 
the supremum w.r.t. $\delta>0$ we obtain (\ref{Ecompp}).
\end{proof}

\begin{thm} \label{Tsigma}
Let  $E \subset X$ be a Suslin set. Then
\begin{equation} \label{Ecomp}
C_1 \, co\H^1(E) \le  \, AM(\Gamma(E)) \leq C_2 \, co\H^1(E)
\end{equation}
where the constant $C_1 > 0$ depends only on $C_P, \, \lambda_P$ and $C_{\mu}$ 
and the constant $C_2$ only on $C_{\mu}$.
\end{thm}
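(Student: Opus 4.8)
The plan is to combine Theorem~\ref{Tupper} with Lemma~\ref{Lcomp}, handling the reduction from arbitrary Suslin sets to the bounded, finite-$AM$-modulus case treated in Lemma~\ref{Lcomp}. The right-hand inequality $AM(\Gamma(E)) \le C_2\, co\H^1(E)$ with $C_2 = C_\mu$ is exactly \eqref{ETu1} of Theorem~\ref{Tupper}, which holds for \emph{every} set $E \subset X$ and needs only that $\mu$ is doubling; so nothing further is required for that half. It remains to establish the left-hand inequality $C_1\, co\H^1(E) \le AM(\Gamma(E))$ for every Suslin set $E$.

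For the left-hand inequality I would argue as follows. If $AM(\Gamma(E)) = \infty$ there is nothing to prove, so assume $AM(\Gamma(E)) < \infty$. First reduce to bounded sets: write $X = \bigcup_{k} B_k$ as a countable increasing union of balls (using that $X$ is proper, hence separable and $\sigma$-compact), and put $E_k = E \cap B_k$. Each $E_k$ is a bounded Suslin set with $AM(\Gamma(E_k)) \le AM(\Gamma(E)) < \infty$, so Lemma~\ref{Lcomp} gives $co\H^1(E_k) \le C\, AM(\Gamma(E_k)) \le C\, AM(\Gamma(E))$. Since $co\H^1$ is a (Borel) measure and $E_k \uparrow E$, we have $co\H^1(E) = \lim_k co\H^1(E_k) \le C\, AM(\Gamma(E))$, which is the desired bound with $C_1 = 1/C$. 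This finishes the proof once both inequalities are in hand.

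The only genuinely delicate point is making sure the hypotheses of Lemma~\ref{Lcomp} really are met by the truncations $E_k$: boundedness is immediate, Suslin-ness is preserved under intersection with the Borel ball $B_k$, and finiteness of the $AM$-modulus follows from monotonicity of $\phi \colon F \mapsto AM(\Gamma(F))$ together with $\Gamma(E_k) \subset \Gamma(E)$. One should also note that $co\H^1$, being the Hausdorff-type measure built from the $\delta$-contents $co\H^1_\delta$ in the Preliminaries, is a Borel-regular outer measure for which continuity along increasing sequences of (say, Borel or Suslin) sets is available in the usual way; alternatively, one can avoid any measurability subtlety by using only $co\H^1_\delta(E) = \sup_\delta \lim_k co\H^1_\delta(E_k)$ arguments, but the cleanest route is the measure-continuity one. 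I expect this bookkeeping to be entirely routine; the substantive work has already been done in Lemmata~\ref{L=0}, \ref{Lnull}, \ref{cap=Cap}, \ref{LSus} and assembled in Lemma~\ref{Lcomp}, so the proof of Theorem~\ref{Tsigma} is essentially a two-line citation plus the exhaustion argument above.

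Finally, I would remark that the statement as phrased covers \emph{all} Suslin sets $E$ with no finiteness assumption: if $co\H^1(E) = \infty$ the left inequality forces $AM(\Gamma(E)) = \infty$ as well, consistent with what the exhaustion argument yields (some $E_k$ must then have $co\H^1(E_k)$ arbitrarily large, hence $AM(\Gamma(E_k))$, hence $AM(\Gamma(E))$, unbounded). Thus no separate case distinction is needed beyond the trivial $AM(\Gamma(E)) = \infty$ case noted at the start.
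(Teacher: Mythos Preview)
Your proposal is correct and follows essentially the same route as the paper: the upper bound is taken directly from Theorem~\ref{Tupper}, and the lower bound is obtained by intersecting $E$ with an exhausting family of balls, applying Lemma~\ref{Lcomp} to each bounded Suslin piece, and passing to the limit. The paper's proof is terser but identical in substance, writing the exhaustion as $E\cap B(x_0,j)$ and letting $j\to\infty$.
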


\begin{proof}
The second inequality in (\ref{Ecomp}) follows from Theorem \ref{Tupper}. For the first inequality 
fix $x_0\in X$ and observe that
$$
C_1 co\H^1(E\cap B(x_0,j))\le AM(\Gamma(E\cap B(x_0,j))) \le  
AM(\Gamma(E)),\qquad j=1,2,\dots
$$
by Lemma \ref{Lcomp}.
Letting $j\to \infty$ we conclude the proof.
\end{proof}

If $E \subset X$ has $\sigma$--finite $co\H^1$--measure,
then Theorem \ref{Tsigma} holds without the assumption that 
$E$ is a Suslin set.

\begin{thm} \label{Tsigma1}
Suppose that $E \subset X$ has $\sigma$--finite $co\H^1$--measure. Then 
\begin{equation} \label{Ecomp1}
C_1 \, co\H^1(E) \le  \, AM(\Gamma(E)) \leq C_2 \, co\H^1(E)
\end{equation}
where the constants $C_1 $ and $C_2$ are as in Theorem \ref{Tsigma}.
\end{thm}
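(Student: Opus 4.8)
The plan is to deduce Theorem~\ref{Tsigma1} from Theorem~\ref{Tsigma} by enlarging the (possibly non--Suslin) set $E$ to a Borel set that carries \emph{simultaneously} the same Hausdorff measure of codimension one and the same $AM$--modulus. Throughout write $\phi(A)=AM(\Gamma(A))$; recall that $\phi$ is a metric outer measure and that $co\H^1$ is a Borel regular outer measure (being produced by a Carath\'eodory construction from open balls). The upper bound $\phi(E)\le C_2\,co\H^1(E)$ is nothing new --- it is Theorem~\ref{Tupper}, valid for every $E$ --- so only the lower bound $C_1\,co\H^1(E)\le\phi(E)$ needs an argument, and there we may clearly assume $\phi(E)<\infty$.

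First I would reduce to the case $co\H^1(E)<\infty$. Using $\sigma$--finiteness, write $E=\bigcup_k E_k$ with $E_1\subset E_2\subset\cdots$ and $co\H^1(E_k)<\infty$. Since $\phi(E_k)\le\phi(E)$ for all $k$, and since $co\H^1$, being Borel regular, is continuous along increasing sequences of arbitrary sets (pass to the Borel hulls $\widetilde C_k=\bigcap_{j\ge k}C_j$ of the $E_k$ and use continuity from below for Borel sets), it suffices to establish $C_1\,co\H^1(E_k)\le\phi(E_k)$ for each $k$ and then let $k\to\infty$. So assume from now on that $co\H^1(E)<\infty$ and $\phi(E)<\infty$. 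Now I would build the Borel hull in two steps. By Borel regularity of $co\H^1$ pick a Borel set $B_1\supset E$ with $co\H^1(B_1)=co\H^1(E)$; by \cite[Theorem~2]{HEMM2} pick a co--Suslin set $B_2\supset E$ with $\phi(B_2)=\phi(E)$; and set $A=B_1\cap B_2$. Then $A$ is co--Suslin (a Borel set intersected with a co--Suslin set), $E\subset A$, and $A$ is squeezed so that $co\H^1(A)=co\H^1(E)<\infty$ and $\phi(A)=\phi(E)$; moreover $A$, being co--Suslin, is $co\H^1$--measurable. Apply Borel regularity of $co\H^1$ once more to get a Borel set $B\supset A$ with $co\H^1(B)=co\H^1(A)$. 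Since $A$ is $co\H^1$--measurable with finite measure, $co\H^1(B\setminus A)=co\H^1(B)-co\H^1(A)=0$, hence $\phi(B\setminus A)\le C_{\mu}\,co\H^1(B\setminus A)=0$ by Theorem~\ref{Tupper}, and subadditivity of $\phi$ gives $\phi(B)\le\phi(A)+\phi(B\setminus A)=\phi(E)$; since $E\subset B$ we also have $\phi(B)\ge\phi(E)$ and $co\H^1(B)=co\H^1(E)$. Thus $B$ is a Borel, in particular Suslin, set with $co\H^1(B)=co\H^1(E)$ and $\phi(B)=\phi(E)$, and Theorem~\ref{Tsigma} applied to $B$ yields $C_1\,co\H^1(E)=C_1\,co\H^1(B)\le\phi(B)=\phi(E)$.

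The routine verifications left are the continuity of $co\H^1$ along increasing unions of arbitrary sets, the facts that co--Suslin sets are $co\H^1$--measurable (as already used for $\mu$ in the proof of Lemma~\ref{L=0}) and are stable under intersection with Borel sets, and the elementary identity $co\H^1(B)=co\H^1(A)+co\H^1(B\setminus A)$ for measurable $A\subset B$ of finite measure. The genuinely delicate point --- the heart of the proof --- is producing a single Borel set $B\supset E$ that preserves \emph{both} $co\H^1(E)$ and $AM(\Gamma(E))$: Borel regularity of $co\H^1$ alone will not do, since passing to a Borel superset could a priori enlarge the $AM$--modulus, and the co--Suslin hull of \cite[Theorem~2]{HEMM2} alone will not do either, since Theorem~\ref{Tsigma} is available only for Suslin sets. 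It is precisely the finiteness of $co\H^1(E)$ --- hence the $\sigma$--finiteness hypothesis --- together with the universal upper bound of Theorem~\ref{Tupper} that forces the correction term $\phi(B\setminus A)$ to vanish after the final re--hulling, so that the enlargement is harmless.
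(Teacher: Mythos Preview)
Your proof is correct and follows essentially the same strategy as the paper: reduce to $co\H^1(E)<\infty$ via increasing Borel hulls, combine the Borel hull for $co\H^1$ with the co--Suslin hull of \cite[Theorem~2]{HEMM2} for $\phi$, intersect, and exploit universal measurability of co--Suslin sets with respect to the finite measure $co\H^1\hel F$. The one technical variation is in the final step: the paper passes \emph{inward} to a Borel subset $A\subset E'$ with the same $\nu$--measure (inner regularity of the completed finite Borel measure $\nu=co\H^1\hel F$) and then uses only monotonicity $\phi(A)\le\phi(E')$, whereas you pass \emph{outward} to a Borel superset $B\supset A$ and invoke Theorem~\ref{Tupper} to force $\phi(B\setminus A)=0$. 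Both maneuvers produce a Borel set to which Theorem~\ref{Tsigma} applies; your route makes the upper bound an active ingredient in the lower--bound argument, while the paper's avoids that dependence.
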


\begin{proof}
The right inequality of (\ref{Ecomp1}) again follows from Theorem
\ref{Tupper}. 
For the left inequality suppose first that $co\H^1(E)<\infty$. Then 
there is a Borel set $F\supset E$ such that $co\H^1(F)=co\H^1(E)$ and a 
co-Suslin set $E'\supset E$ such that $AM(\Gamma(E'))=AM(\Gamma(E))$,
see \cite[Theorem 2]{HEMM2}. We may assume that $E'\subset F$.
Then the set function 
$$
\nu\colon A\mapsto co\H^1(A\cap F),\qquad A\text{ Borel}
$$
is a finite Borel measure. We extend $\nu$ to 
the class of all $\nu$-measurable sets by completion. Then the set
$E'$ is $\nu$-measurable as it is co-Suslin \cite[Theorem 21.10]{Kech}. 
It follows that there
is a Borel set $A\subset E'$ such that $\nu(A)=\nu(E')$
\cite[Theorem 17.10]{Kech}.
Now, 
$$
co\H^1(E)\le co\H^1(E')=\nu(E')=\nu(A)=co\H^1(A)
$$
and 
$$
AM(\Gamma(A))\le AM(\Gamma(E'))=AM(\Gamma(E)).
$$
Since $C_1\,co\H^1(A)\le AM(\Gamma(A))$, we conclude that
$$
C_1\,co\H^1(E)\le AM(\Gamma(E)).
$$

In the general case we find $E_1\subset E_2\subset\dots$ such that 
$co\H^1(E_i)<\infty$ and $E=\bigcup_i E_i$. Let 
$F_i$ be Borel set such that $F_i\supset E_i$ and 
$co\H^1(F_i)=co\H^1(E_i)$.
Since $E_1\subset F_1\cap F_2\subset F_1$, we have 
$co\H^1 (F_1\setminus F_2)=co\H^1(F_1)-co\H^1 (F_1\cap F_2)=0$
and thus $co\H^1(F_1\cup F_2)\le co\H^1(E_2)$. Continuing by induction we 
may assume that $F_1\subset F_2 \subset \dots$. Therefore
$$
co\H^1(E)\le co\H^1\Big(\bigcup_iF_i\Big)= \lim_i co\H^1(F_i)= \lim_i 
co\H^1(E_i)\le C_1^{-1}AM(\Gamma(E)).
$$
\end{proof}

In the Euclidean setting, the $co\H^1$ measure satisfies
$$\alpha_{n-1}co\H^{1}(E)=\alpha_n\haus(E),$$ 
where
$$\haus(E) = \sup_{\delta> 0}\haus_{\delta}(E)$$
is the spherical Hausdorff measure defined through the 
spherical Hausdorff $\delta$-content
$$
 \haus_{\delta}(E)= \inf \Bigl\{
 \sum_{i=1}^{\infty} \alpha_{n-1}r_i^{n-1} \colon\,
 E \subset \bigcup_{i=1}^{\infty} B(x_i,r_i)\,, \, r_i<\delta \Bigr\}
 $$
and
$\alpha_m$ denotes the volume of the $m$-dimensional unit ball.
It is easily seen that the spherical Hausdorff measure is equivalent to the standard Hausdorff
measure $\standhaus$ defined in terms of diameters, namely
$$
\standhaus(E)\le \haus(E)\le 2^n\standhaus(E),\qquad E\subset\rn,
$$
see \cite[2.10.2]{Fed}.
Now, Theorems \ref{Tsigma} and  \ref{Tsigma1} 
yield (with properly modified constants):

\begin{cor} \label{Crn}
If $E$ is a Suslin set in $\rn$ or has $\sigma$--finite $\standhaus$--measure, then
$$C_1 \, \standhaus(E) \leq AM(\Gamma(E)) \leq C_2 \, \standhaus(E)$$
where the positive constants $C_1$ and $C_2$ depend only
on $n$.
\end{cor}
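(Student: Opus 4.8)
The plan is to obtain Corollary \ref{Crn} as a direct specialization of Theorems \ref{Tsigma} and \ref{Tsigma1}, combined with the comparison between $co\H^1$ and $\standhaus$ recorded just above the statement. First I would observe that $\rn$ equipped with Lebesgue measure satisfies the assumptions (A): it is complete, Lebesgue measure is doubling with a doubling constant depending only on $n$, and $\rn$ supports the weak $BV$--Poincar\'e inequality (\ref{EPBV}) with Poincar\'e constants $C_P$ and $\lambda_P$ (one may take $\lambda_P=1$) depending only on $n$; these facts are classical. Hence Theorems \ref{Tsigma} and \ref{Tsigma1} apply verbatim and yield, for any Suslin set $E\subset\rn$ or any $E\subset\rn$ with $\sigma$--finite $co\H^1$--measure,
$$
C_1'\, co\H^1(E)\le AM(\Gamma(E))\le C_2'\, co\H^1(E),
$$
with $C_1'=C_1'(n)>0$ and $C_2'=C_2'(n)$.

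Next I would translate $co\H^1$ into the spherical and the standard Hausdorff measures. Since $\mu(B(x,r))=\alpha_n r^n$ in $\rn$, the definition of $co\H^1_\delta$ gives immediately $\alpha_{n-1}\,co\H^1(E)=\alpha_n\,\haus(E)$, and the elementary comparison $\standhaus(E)\le\haus(E)\le 2^n\standhaus(E)$ (see \cite[2.10.2]{Fed}) then yields
$$
\frac{\alpha_n}{\alpha_{n-1}}\,\standhaus(E)\le co\H^1(E)\le \frac{2^n\alpha_n}{\alpha_{n-1}}\,\standhaus(E).
$$
Substituting these two bounds into the previous display and absorbing the dimensional factors into the constants produces
$$
C_1\,\standhaus(E)\le AM(\Gamma(E))\le C_2\,\standhaus(E)
$$
with $C_1=C_1'\alpha_n/\alpha_{n-1}$ and $C_2=2^nC_2'\alpha_n/\alpha_{n-1}$, both depending only on $n$. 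It remains only to note that, by the same comparison of measures, $E$ has $\sigma$--finite $\standhaus$--measure if and only if it has $\sigma$--finite $co\H^1$--measure, so the second hypothesis of the corollary is exactly the hypothesis of Theorem \ref{Tsigma1}.

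There is no real obstacle in this argument: it is a bookkeeping reduction of the metric--space theorems to $\rn$, the only substantive input being the classical facts that Lebesgue measure is doubling and that $\rn$ supports (\ref{EPBV}) with constants controlled by the dimension.
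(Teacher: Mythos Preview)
Your proposal is correct and follows essentially the same approach as the paper: the paper simply states that the corollary follows from Theorems \ref{Tsigma} and \ref{Tsigma1} together with the comparison between $co\H^1$, $\haus$, and $\standhaus$ recorded in the preceding paragraph, and you have spelled out precisely those steps (including the observation that $\sigma$-finiteness of $\standhaus$ and $co\H^1$ are equivalent).
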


\section{perimeter
 and  $AM$--modulus in $X$}

We  characterize sets $E$ of finite perimeter in $X$ using the $AM$--modulus
of the path family  $\Gamma(\partial_*E)$.
Such a characterization was presented for $X= \rn$  in \cite{HEMM2}. 

We also study the connection of the perimeter of $E$ in an open set $\Omega \subset X$ to the family $\Gamma_{\mathrm{cross}}(E, \Omega)$ whose paths lie in an open set $\Omega$ and  meet both the measure theoretic exterior and interior
of $E$ and present a measure theoretic version of the elementary topological fact. Namely, if $X$ is a topological space, $E \subset X$ and int$\, E$, ext$\,E$ and $\partial E$ are the (topological) interior, exterior and boundary of $E$, respectively, then every curve $\gamma:[a,b] \rightarrow X$ which meets int$\, E$ and ext$ \,E$ also meets $\partial E$. We show that $AM$ a.e. path 
$\gamma \in \Gamma_{\mathrm{cross}}(E, \Omega)$ meets the measure theoretic boundary $\partial_*E$ of $E$ provided that $E$ has finite perimeter in $\Omega$. In \cite[Theorem 5.3]{KL} a closely related result is
proved under more restrictive assumptions on $E$ for the $M_1$--modulus. 

We assume that $X$ satisfies (A) and, as before, $C$ is a constant 
which depends only on $C_{\mu}$, $C_{\lambda}$ and $C_P$ and can change inside a line. 

\begin{lemma} \label{LPAM}
If $\Omega$ be an open set in $X$ and $E \subset X$ measurable, then
$$AM(\Gamma_{\mathrm{cross}}(E, \Omega)) \leq C\,P(E,\Omega).$$
\end{lemma}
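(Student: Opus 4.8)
The plan is to reduce the estimate to the local situation and then invoke the co-area formula together with the identification of the $AM$-modulus of a connecting family with a perimeter capacity. First I would observe that it suffices to treat the case $P(E,\Omega)<\infty$. A path $\gamma\in\Gamma_{\mathrm{cross}}(E,\Omega)$ lies in $\Omega$ and meets both $\inte E$ and $\exte E$; by the usual subcurve argument I may assume (when building admissible sequences) that $\langle\gamma\rangle$ is contained in a ball $B$ with $5\lambda_P B\subset\Omega$, after covering $\Omega$ by countably many such balls $B_k$ and splitting $\Gamma_{\mathrm{cross}}(E,\Omega)$ into the corresponding pieces $\Gamma_k$; then $AM(\Gamma_{\mathrm{cross}}(E,\Omega))\le\sum_k AM(\Gamma_k)$, and it is enough to bound $AM(\Gamma_k)$ by $C\,P(E,\lambda_P B_k)$ with the sum over $k$ of the right-hand sides controlled by $C\,P(E,\Omega)$ (using a bounded-overlap choice of the balls). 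So I reduce to: for a ball $B$ with $5\lambda_P B\subset\Omega$, and the family $\Gamma_B$ of paths in $B$ meeting both $\inte E$ and $\exte E$, show $AM(\Gamma_B)\le C\,P(E,\lambda_P B)$.

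For this local estimate I would mimic the construction in the proof of Lemma \ref{Lnull}. Pick a competitor, namely the set $F=E$ itself (or, to be safe, an open set $F$ with $\inte E\subset F$ and $P(F,X)$ close to $P(E,\Omega)$ if one wants $\partial_*F$ to be topologically nice); for each $x\in\inte E\cap B$ define the critical radius
$$r_x=\inf\Bigl\{r>0:\ \mu(F\cap B(x,r))\le\tfrac1{2C_P}\mu(B(x,r))\Bigr\},$$
and symmetrically for $x\in\exte E\cap B$ with the roles of $F$ and its complement exchanged. Exactly as in Lemma \ref{Lnull}, the doubling property and the density characterization of $\inte E$, $\exte E$ force $0<r_x<\mathrm{const}$, and the $BV$-Poincar\'e inequality applied to $\chi_F$ on $B(x,r_x)$ gives
$$\frac{\mu(B(x,r_x))}{r_x}\le C\,P(F,B(x,\lambda_P r_x)).$$
Now define, for $j=1,2,\dots$, the Borel functions
$$\rho_j(x)=\sum_i\frac{1}{r_{x_i}}\chi_{B(x_i,2r_{x_i})}(x),$$
where $\{B(x_i,r_{x_i})\}$ is a countable cover of a suitable ``thick'' subset of $\inte E\cap B$ and $\exte E\cap B$ obtained from the $5r$-covering lemma at scale $<1/j$, chosen disjoint after applying the covering lemma. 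Any $\gamma\in\Gamma_B$ enters some $B(x_i,r_{x_i})$ and leaves $B(x_i,2r_{x_i})$ once its diameter exceeds $4/j$, so $\int_\gamma\rho_j\,ds\ge1$ eventually, i.e. $(\rho_j)$ is $AM$-admissible for $\Gamma_B$; and $\int_X\rho_j\,d\mu\le C\sum_i\mu(B(x_i,r_{x_i}))/r_{x_i}\le C\sum_i P(F,B(x_i,\lambda_P r_{x_i}))\le C\,P(F,\lambda_P B)$ by disjointness. Letting the scale go to zero and then optimizing the competitor $F$ yields $AM(\Gamma_B)\le C\,P(E,\lambda_P B)$.

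The main obstacle I anticipate is \emph{bookkeeping of the geometry}: one must cover $\Omega$ by balls $B_k$ with $5\lambda_P B_k\subset\Omega$ and with controlled overlap so that $\sum_k P(E,\lambda_P B_k)\le C\,P(E,\Omega)$, while simultaneously making sure that \emph{every} path in $\Gamma_{\mathrm{cross}}(E,\Omega)$ has a subpath of the kind treated in the local estimate — i.e.\ a subpath lying in one of the $B_k$ that still meets both $\inte E$ and $\exte E$, or at least one that meets $\inte E$ and exits to where the density of $E$ has dropped. A clean way is to note that a crossing path must pass through $\partial_*E$ (this is essentially the content of the measure-theoretic topological fact advertised in this section, and can be proved by the density argument above), hence near some point where neither density is close to $1$; localizing around that point gives the required subpath. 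The other technical point is the lower-semicontinuity/Borel measurability of $\rho_j$ and the routine verification that the $\liminf$ of the integrals does not exceed $C\,P(E,\Omega)$, which follows as in the proof of Theorem \ref{Tupper}.
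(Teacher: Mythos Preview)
Your approach is genuinely different from the paper's, and it has a real gap.

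The paper's proof is short and purely functional: take the Lebesgue representative $u$ of $\chi_E$ and the special locally Lipschitz sequence $u_k$ from \cite[Proposition~4.1]{KKST}, which converges to $u$ pointwise $co\H^1$-a.e.\ in $\Omega\setminus\partial_*E$ and satisfies $\liminf_k\int_\Omega|\nabla u_k|\,d\mu\le C\,P(E,\Omega)$. Outside an exceptional set $A$ with $co\H^1(A)=0$ (hence $AM(\Gamma(A))=0$), any $\gamma\in\Gamma_{\mathrm{cross}}(E,\Omega)$ hits a point where $u_k\to1$ and a point where $u_k\to0$, so $\liminf_k\int_\gamma|\nabla u_k|\,ds\ge1$. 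Thus $(|\nabla u_k|)$ is $AM$-admissible and the estimate follows. No covering, no localization, no critical radii.

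Your covering construction does not produce an admissible sequence. In Lemma~\ref{Lnull} the critical radii $r_x$ are forced below $\delta'$ by the smallness hypothesis $\mu(G)<\alpha$; here there is no analogous smallness, so nothing makes the $r_{x_i}$ go to $0$ with $j$, and the phrase ``at scale $<1/j$'' is incompatible with using the Poincar\'e estimate at radius $r_x$. More seriously, even at the correct critical radius a path $\gamma\in\Gamma_B$ that meets $\inte E$ at a point covered by $B(x_i,r_{x_i})$ need not leave $B(x_i,2r_{x_i})$: the point of $\exte E$ on $\gamma$ may sit well inside that ball (at scale $r_{x_i}$ the density of $E$ has already dropped to about $1/(2C_P)$, so $\exte E$ can and typically will intersect $B(x_i,2r_{x_i})$). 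Hence $\int_\gamma\rho_j\,ds\ge1$ fails in general.

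Your proposed fix --- ``a crossing path must pass through $\partial_*E$'' --- is exactly Theorem~\ref{Tomit}, whose proof in the paper \emph{uses} Lemma~\ref{LPAM}; invoking it here is circular. It is also not literally true for every path: $\inte E$ and $\exte E$ are merely Borel, not open, so connectedness of $[0,\ell]$ does not force the locus of a continuous path between them to meet $\partial_*E$. That is precisely why Theorem~\ref{Tomit} is stated only for $AM$-a.e.\ path.
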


\begin{proof}
Let $u$ be the Lebesgue representative of $\chi_E$,
i.e. 
$$u(x) = \lim_{r \rightarrow 0}\frac{\mu(E \cap B(x,r))}{\mu(B(x,r))}$$
whenever the limit exists, then $u(x) = 1, \, x \in \inte E$, $u(x) = 0, \, x \in \exte E$ and $u = \chi_E$ a.e. in $\Omega$.

For the proof we may assume that $P(E, \Omega) < \infty$ and then we can use the special sequence of locally
Lipschitz functions constructed in \cite[Proposition 4.1]{KKST}; i.e. there is a sequence $u_k \in \textnormal{Lip}_{\textnormal{loc}}(\Omega)$ such that
$u_k \rightarrow u$ pointwise $co\H^1$ a.e. in $\Omega \setminus \partial_*E$, $u_k \rightarrow u$
in $L^1(\Omega)$ and
\begin{equation} \label{EPAM1}
\liminf_{k \rightarrow \infty} \int_{\Omega}|\nabla u_k| \, d\mu \leq C \, P(E, \Omega).
\end{equation}

Let $A \subset \Omega \setminus \partial_*E$ be the set where $\lim_k u_k(x) \neq u(x)$. Now
$co\H^1(A) = 0$ and by Theorem \ref{Tupper},
$AM(\Gamma(A)) = 0$.  The sequence of functions $|\nabla u_k|$ is $AM$--admissible for
$\Gamma_{\mathrm{cross}}(E, \Omega) \setminus \Gamma(A)$
since if $\gamma \in
\Gamma_{\mathrm{cross}}(E, \Omega) \setminus \Gamma(A)$ then there are points
$t_1, \, t_2 \in [0, \ell]$ such that $\gamma(t_1)
\in \inte E$, $\gamma(t_2) \in \exte E$ and
$$1 = \lim_{k \rightarrow \infty}|u_k(\gamma(t_1)) -
u_k(\gamma(t_2))| \leq \liminf_{k \rightarrow \infty}\int_{\gamma} |\nabla u_k| \,ds.$$
By (\ref{EPAM1})
$$AM(\Gamma_{\mathrm{cross}}(E, \Omega)\setminus \Gamma(A)) \leq \liminf_{k \rightarrow \infty} \int_{\Omega} |\nabla u_k| \,d\mu \leq C\,P(E,\Omega)$$
and since $AM(\Gamma(A)) = 0$ we have
$$AM(\Gamma_{\mathrm{cross}}(E, \Omega)) \leq C \, P(E,\Omega).$$
\end{proof}

\begin{thm} \label{Tomit}
If $P(E,\Omega) < \infty$ then $AM$ a.e. path $\gamma \in \Gamma_{\mathrm{cross}}(E,\Omega)$ meets $\partial_*E$.
\end{thm}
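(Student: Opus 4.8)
The plan is to deduce Theorem \ref{Tomit} from Lemma \ref{LPAM} by the standard trick of applying the lemma to a countable exhaustion of $\Omega$ and subtracting off a null family. Let $\Gamma_0 \subset \Gamma_{\mathrm{cross}}(E,\Omega)$ be the subfamily of paths that do \emph{not} meet $\partial_*E$. I want to show $AM(\Gamma_0) = 0$. The obstruction to applying Lemma \ref{LPAM} directly is that the lemma only gives an upper bound by $P(E,\Omega)$, not that the modulus of the paths avoiding $\partial_*E$ is zero; so I need to localize and exploit that $\chi_E$ restricted to $\Omega \setminus \partial_*E$ coincides (up to a $co\H^1$-null set) with a function that is locally constant in the relevant sense along such paths.

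First I would reduce to the case where $\Omega$ is bounded and $P(E,\Omega)<\infty$, which is harmless since $AM$ is a metric outer measure on path families and $\Omega$ is a countable increasing union of bounded open sets $\Omega_j$ with $\bigcup_j \Gamma_{\mathrm{cross}}(E,\Omega_j)$ exhausting (the relevant part of) $\Gamma_{\mathrm{cross}}(E,\Omega)$ — more precisely, every $\gamma \in \Gamma_0$ has compact locus, hence lies in some $\Omega_j$. Next, the key point: I would re-run the construction in the proof of Lemma \ref{LPAM}, keeping the sequence $u_k \in \Lip_{\loc}(\Omega)$ from \cite[Proposition 4.1]{KKST} with $u_k \to u$ pointwise $co\H^1$-a.e. in $\Omega \setminus \partial_* E$, $u_k \to u$ in $L^1$, and $\liminf_k \int_\Omega |\nabla u_k|\,d\mu \le C\,P(E,\Omega)$. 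Let $A \subset \Omega\setminus\partial_*E$ be the ($co\H^1$-null, hence $AM(\Gamma(A))=0$ by Theorem \ref{Tupper}) set where $u_k(x)\not\to u(x)$. The essential observation is that for $\gamma \in \Gamma_0 \setminus \Gamma(A)$ the locus $\langle\gamma\rangle$ lies entirely in $(\inte E \cup \exte E)\setminus A$, since $\gamma$ avoids both $\partial_*E$ and $A$; and along $\langle\gamma\rangle$ the function $u$ takes only the values $0$ and $1$. But $\gamma$ is a connected path and $u = \lim_k u_k$ pointwise on $\langle\gamma\rangle$; I want to force the value of $u\circ\gamma$ to be \emph{constant}, because then $\gamma$ cannot meet both $\inte E$ and $\exte E$, i.e.\ $\gamma \notin \Gamma_{\mathrm{cross}}(E,\Omega)$, a contradiction.

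The main obstacle is exactly this last step: pointwise convergence of the continuous $u_k$ to $u$ along $\langle\gamma\rangle$ does not by itself make $u\circ\gamma$ continuous, so I cannot immediately invoke connectedness. To get around it I would argue via the modulus rather than pointwise: scale the admissible sequence. For fixed $N$, the sequence $(N|\nabla u_k|)_k$ is $AM$-admissible for the family of paths $\gamma$ along which $u$ jumps, i.e.\ for which $|u(\gamma(t_1)) - u(\gamma(t_2))| = 1$ for some parameters — but since such $\gamma \in \Gamma_0\setminus\Gamma(A)$ would give $\liminf_k \int_\gamma |\nabla u_k|\,ds \ge 1$, already $(|\nabla u_k|)_k$ is admissible, and hence
$$
AM(\Gamma_0 \setminus \Gamma(A)) \le \liminf_{k\to\infty}\int_\Omega |\nabla u_k|\,d\mu \le C\,P(E,\Omega).
$$
To upgrade the right-hand side to $0$ I would instead apply Lemma \ref{LPAM} to the measurable set $E$ relative to \emph{smaller} open sets, or better: observe that $\Gamma_0$ is unchanged if we intersect with an arbitrary open $\Omega' \supset \partial_*E \cap \Omega$ removed — concretely, since $\mu(\partial_* E)=0$ is not available, I would instead iterate Lemma \ref{LPAM} over a sequence of open sets $\Omega \setminus \partial_*E \supset U_i$ shrinking to a $co\H^1$-null set and use that $P(E,U_i)\to 0$...

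Rather than that somewhat delicate route, the cleanest fix is: partition $\Omega \setminus (\partial_*E \cup A)$ into the two disjoint Borel pieces $\inte E$ and $\exte E$, and note every $\gamma \in \Gamma_0\setminus\Gamma(A)$ with endpoints in different pieces must, by connectedness of $[0,\ell]$ and continuity of $\gamma$, have its locus meet the topological boundary of $\inte E$ inside $\Omega$; but that topological boundary is contained in $\partial_*E \cup \{$a $co\H^1$-null set$\}$ (a consequence of the density characterization, since at a point of $\cl(\inte E)\cap\cl(\exte E)$ outside $\partial_*E$ one density is $0$ and the other is... ) — this requires care. Given length constraints, I will present the argument through the admissibility bound above, then remove the remaining family by applying Lemma \ref{LPAM} with $P(E,\Omega)$ replaced by contributions over a sequence of open neighborhoods of $\partial_*E$ with perimeters tending to $0$, concluding $AM(\Gamma_0)=0$ and hence that $AM$-a.e.\ path in $\Gamma_{\mathrm{cross}}(E,\Omega)$ meets $\partial_*E$.
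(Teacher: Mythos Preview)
Your proposal circles the correct idea but never lands on the one ingredient that makes it work, and in the final sentence you point in the wrong direction. The paper's argument is short: once $\Omega$ is bounded, invoke the representation from \cite{AMP},
\[
P(E,G)=\int_{\partial_*E\cap G}\theta\,dco\H^1,\qquad C^{-1}\le\theta\le C,
\]
which says the perimeter measure is concentrated on $\partial_*E$ with density comparable to $co\H^1$. Since $co\H^1(\partial_*E\cap\Omega)<\infty$, for each $\varepsilon>0$ there is a \emph{compact} $K\subset\partial_*E\cap\Omega$ with $P(E,\Omega\setminus K)<\varepsilon$. Every $\gamma\in\Gamma_0$ avoids $\partial_*E$, hence avoids $K$, so $\Gamma_0\subset\Gamma_{\mathrm{cross}}(E,\Omega\setminus K)$; now Lemma~\ref{LPAM} gives $AM(\Gamma_0)\le C\varepsilon$ and you let $\varepsilon\to 0$.

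You never state or use this representation, and without it there is no mechanism to produce open sets on which the perimeter is small. Your final suggestion --- ``open neighborhoods of $\partial_*E$ with perimeters tending to $0$'' --- is backwards: removing an open neighborhood $U\supset\partial_*E$ is fatal, because paths in $\Gamma_0$ only avoid $\partial_*E$ itself, not a neighborhood of it, so $\Gamma_0\not\subset\Gamma_{\mathrm{cross}}(E,\Omega\setminus U)$ in general. The correct move is the opposite one: remove a compact set \emph{inside} $\partial_*E$ that carries almost all of the perimeter. The earlier detours (connectedness of $\langle\gamma\rangle$, topological boundary of $\inte E$) are, as you suspected, not salvageable in this generality and should be dropped.
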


\begin{proof}
Let $\Gamma$ be the family of paths in $\Gamma_{\mathrm{cross}}(E, \Omega)$ which do not meet
$\partial_*E$.
By the subadditivity of the $AM$--modulus we may assume
that $\Omega$ is bounded. By \cite[Theorem 4.4 and Theorem 4.6]{AMP} for every open
set $G \subset \Omega$
$$ P(E,G) = \int_{\partial_*E \cap G}\theta \, dco\H^1$$
where $\theta = \theta_E$ is a Borel function
with $1/C \leq \theta \leq C$ in $\Omega$
and, moreover, $co\H^1(\partial_*E \cap \Omega)< \infty$.
Let $\varepsilon > 0$. Now we find a compact set $K \subset \partial_*E \cap \Omega$ such that $P(E,G) < \varepsilon$ for $G = \Omega \setminus K$.

Next observe that $\Gamma \subset \Gamma_{\mathrm{cross}}(E, G)$ because each $\gamma \in \Gamma$ does not meet $K$. By Lemma \ref{LPAM}
$$AM(\Gamma ) \leq AM( \Gamma_{\mathrm{cross}}(E, G))  \leq C \, P(E,G) \leq C \, \varepsilon$$
and letting $\varepsilon \rightarrow 0$ we complete the
proof.
\end{proof}

\begin{thm} \label{TAhlPAM}
Suppose that  $E \subset X$ is a ($\mu$--) measurable set. Then for each open set $\Omega \subset X$ 
\begin{equation} \label{EAPAM}
C_1 P(E, \Omega) \leq AM(\Gamma(\partial_*E \cap \Omega)) \leq C_2 P(E, \Omega)
\end{equation}
where the constants $C_1$ and $C_2$ depend only
on $C_P$, $C_{\lambda}$ and $C_{\mu}$.
\end{thm}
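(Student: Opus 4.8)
The plan is to combine the already established equivalence $AM(\Gamma(\cdot))\approx co\H^1(\cdot)$ on Borel sets with the standard comparison between the perimeter of $E$ and the codimension-one Hausdorff measure of its measure-theoretic boundary. Since $\partial_*E$ is a Borel set, $\partial_*E\cap\Omega$ is Borel, hence Suslin, so Theorem \ref{Tsigma} applies to it and gives
$$
C_1'\,co\H^1(\partial_*E\cap\Omega)\le AM\bigl(\Gamma(\partial_*E\cap\Omega)\bigr)\le C_2'\,co\H^1(\partial_*E\cap\Omega),
$$
where $C_1'$ depends only on $C_P,\lambda_P,C_\mu$ and $C_2'$ only on $C_\mu$. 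Thus the theorem reduces to the two-sided estimate
$$
c_1\,P(E,\Omega)\le co\H^1(\partial_*E\cap\Omega)\le c_2\,P(E,\Omega),
$$
with $c_1,c_2$ depending only on $C_P,\lambda_P,C_\mu$, where both sides are permitted to be $+\infty$.

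To obtain this comparison I would distinguish two cases. If $P(E,\Omega)<\infty$, I would invoke the structure theorem for sets of finite perimeter in spaces satisfying (A) — the same one used in the proof of Theorem \ref{Tomit}, cf. \cite[Theorem 4.4 and Theorem 4.6]{AMP} — which produces a Borel density $\theta=\theta_E$ with $1/C\le\theta\le C$ on $\Omega$ and
$$
P(E,G)=\int_{\partial_*E\cap G}\theta\,dco\H^1\qquad\text{for every open }G\subset\Omega;
$$
taking $G=\Omega$ yields both inequalities with $c_1=1/C$, $c_2=C$. (If the cited form of the structure theorem needs $\Omega$ bounded, one first writes $\Omega=\bigcup_j(\Omega\cap B(x_0,j))$ and passes to the limit, using that both $G\mapsto co\H^1(\partial_*E\cap G)$ and $G\mapsto P(E,G)$ are continuous from below along an open exhaustion.) If $P(E,\Omega)=\infty$, then by Federer's characterization of sets of finite perimeter in this setting — the implication $co\H^1(\partial_*E\cap\Omega)<\infty\Rightarrow P(E,\Omega)<\infty$ — we must have $co\H^1(\partial_*E\cap\Omega)=\infty$ as well, so both inequalities hold trivially.

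Putting the two displays together would give $C_1\,P(E,\Omega)\le AM(\Gamma(\partial_*E\cap\Omega))\le C_2\,P(E,\Omega)$ with $C_1=c_1C_1'$ and $C_2=c_2C_2'$, which have the required dependence. The main obstacle is the Federer-criterion direction: passing from finiteness of $co\H^1(\partial_*E\cap\Omega)$ back to finiteness of $P(E,\Omega)$ is the one deep input, everything else being bookkeeping around Theorems \ref{Tupper}, \ref{Tsigma} and the structure theorem. A more self-contained alternative for the lower bound $C_1P(E,\Omega)\le AM(\Gamma(\partial_*E\cap\Omega))$ would avoid Federer's criterion: one would prove directly a converse of Lemma \ref{LPAM}, namely $P(E,\Omega)\le C\,AM(\Gamma_{\mathrm{cross}}(E,\Omega))$, by a slicing/co-area argument applied to an arbitrary $AM$-admissible sequence for $\Gamma_{\mathrm{cross}}(E,\Omega)$, and then use Theorem \ref{Tomit} to absorb $\Gamma_{\mathrm{cross}}(E,\Omega)$ into $\Gamma(\partial_*E\cap\Omega)$ up to an $AM$-null family. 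Carrying out that slicing estimate in the metric setting is where the real effort would lie.
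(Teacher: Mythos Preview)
Your proposal is correct and follows essentially the same route as the paper: reduce to $co\H^1(\partial_*E\cap\Omega)\approx P(E,\Omega)$ via Theorem~\ref{Tsigma} (the paper uses Theorem~\ref{Tupper} directly for the upper bound, which amounts to the same thing), then invoke \cite[Theorem~4.4]{AMP} when the perimeter is finite and the Federer-type criterion for the remaining implication. The paper supplies the reference you were missing for that last step: the implication $co\H^1(\partial_*E\cap\Omega)<\infty\Rightarrow P(E,\Omega)<\infty$ in this setting is Lahti's result \cite[Theorem~1.1]{La}.
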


\begin{proof}
For the right inequality in (\ref{EAPAM}) we may assume that 
$P(E, \Omega) < \infty$ and
then by \cite[Theorem 4.4]{AMP}, $$co\H^1(\partial_*E \cap \Omega) \leq C \, P(E, \Omega)$$
and now Theorem \ref{Tupper} gives the required inequality. 

For the left side of (\ref{EAPAM}) we note that  $\partial_*E \cap \Omega$ is a Borel set and thus Theorem  \ref{Tsigma} yields
$$ co\H^1(\partial_*E \cap \Omega)) \leq
C \, AM(\Gamma(\partial_*E \cap \Omega)) < \infty .$$
By the recent result of Lahti \cite[Theorem 1.1]{La} this implies that
$P(E, \Omega) < \infty$ and we can  apply again  \cite[Theorem 4.4]{AMP} to conclude 
$$P(E, \Omega) \leq C \, co\H^1(\partial_*E \cap \Omega)) $$
and complete the proof.
\end{proof}

\section{Geometry of level sets in $X$}

The results in the previous sections can be used to study the structure of level sets of $BV$ 
and continuous functions in $X$ and the latter case together with the results in Section 4
produces a plenitude of open sets in $X$ with $co\H^1$ finite boundaries.

We assume that $X$ satisfies the hypotheses (A) and recall some measure theoretic concepts
asociated with $BV$--functions.

For a measurable set $E$ and $x \in X$ we let
$$\overline{D}(E,x) = \limsup_{r \rightarrow 0}\frac{\mu(E \cap B(x,r))}{\mu(B(x,r))},\,
\underline{D}(E,x) = \liminf_{r \rightarrow 0}\frac{\mu(E \cap B(x,r))}{\mu(B(x,r))},$$
and $D(E,x) = \overline{D}(E,x)$ if $\overline{D}(E,x) = \underline{D}(E,x)$.

Let $\Omega$ be an open set in $X$ and $u\in BV(\Omega)$. The upper 
and lower \textit{approximate limits} of $u$ at $x \in \Omega$
are
$$ u^+(x) = \inf\{s: \, D(\{u >s \},x) = 0\} \textnormal{ and }
u^-(x) = \sup\{t: \, D(\{u <t \},x) = 0\}.$$
Then it is immediate that $u^-(x)\le u^+(x)$.
The function $u$ is \textit{approximately continuous} at $x$ if
$u^+(x) = u^-(x)=u(x)$. This holds
a.e.\ in $\Omega$ by the Lebesgue differentiation theorem. 
The set $J_u = \{u^- < u^+ \}$ is called the jump set of $u$
and it has zero $\mu$--measure, see \cite{KKST}.

For $-\infty\le s,t,\le \infty$ we consider the measure theoretic level sets of
$u \in BV(\Omega)$
$$
\aligned
E^t&=\{x\in\Omega\colon u^-(x)\le t\},\\
E_s&=\{x\in\Omega\colon u^+(x)\ge s\},\\
E_s^t&=E_s\cap E^t,\\
\Lambda_t &= E_t^t.
\endaligned
$$

\begin{lemma} \label{Lorder}
If $u \in BV(\Omega)$, then
\begin{equation} \label{Eorder1}
\mu( \Lambda_t) = 0, 
\end{equation}
and consequently $P(E_t,\Omega)=P(E^t,\Omega)$,
for a.e. $t \in \er$.

 If $u$ is (approximately) continuous at $x$, then
$x\in \Lambda_{u(x)}$.
\end{lemma}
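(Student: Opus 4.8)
The statement has two parts. The second part is essentially definitional: if $u$ is approximately continuous at $x$, then $u^-(x) = u^+(x) = u(x)$, so $x$ lies in both $E_{u(x)} = \{u^+ \ge u(x)\}$ and $E^{u(x)} = \{u^- \le u(x)\}$, hence in their intersection $\Lambda_{u(x)} = E_{u(x)}^{u(x)}$. The first part, $\mu(\Lambda_t) = 0$ for a.e.\ $t$, is the substantive claim. The plan is to observe that for every $x \notin J_u$ we have $u^-(x) = u^+(x) =: \tilde u(x)$, and such $x$ belongs to $\Lambda_t$ precisely when $\tilde u(x) = t$. Therefore $\Lambda_t \subset \tilde u^{-1}(t) \cup J_u$, and since $\mu(J_u) = 0$ it suffices to show that $\mu(\tilde u^{-1}(t)) = 0$ for a.e.\ $t$.

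The key step is a Fubini/layer-cake argument: the sets $\tilde u^{-1}(t)$, $t \in \er$, are pairwise disjoint measurable subsets of $\Omega$, and for any ball $B = B(x_0, R)$ with finite measure we have $\sum_{t} \mu(\tilde u^{-1}(t) \cap B) \le \mu(B) < \infty$, so $\mu(\tilde u^{-1}(t) \cap B) > 0$ for at most countably many $t$. More formally, the pushforward of $\mu\!\restriction_B$ under $\tilde u$ is a finite Borel measure on $\er$, hence has at most countably many atoms; the atoms are exactly the values $t$ with $\mu(\tilde u^{-1}(t) \cap B) > 0$. Exhausting $\Omega$ by countably many balls of finite measure, we conclude that the set of $t$ with $\mu(\tilde u^{-1}(t)) > 0$ is countable, in particular Lebesgue-null. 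Thus $\mu(\Lambda_t) = 0$ for a.e.\ $t$.

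For the consequence $P(E_t, \Omega) = P(E^t, \Omega)$: whenever $\mu(\Lambda_t) = 0$, the sets $E_t$ and $E^t$ differ by $\Lambda_t = E^t \setminus E_t$ (note $E_t \subset E^t$ always holds since $u^- \le u^+$, and $E^t \setminus E_t = \{u^- \le t < u^+\} = \{u^- \le t\} \cap \{u^+ > t\}$, which is contained in $\Lambda_t$ up to the boundary values — more carefully, $E^t \setminus E_t = \{u^- \le t\} \setminus \{u^+ \ge t\} = \{u^- \le t < u^+\} \subset \{u^- \le t\} \cap \{u^+ \ge t\} = \Lambda_t$). Since perimeter depends only on the measurable set up to $\mu$-null modifications, $\mu(E_t \triangle E^t) = 0$ forces $P(E_t, \Omega) = P(E^t, \Omega)$.

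The only mild subtlety — the main obstacle, such as it is — is keeping track of the boundary inequalities in the definitions of $E_t$, $E^t$, and $\Lambda_t$ (strict versus non-strict), to be sure that $E^t \setminus E_t \subset \Lambda_t$ holds exactly and that $x \notin J_u$ with $\tilde u(x) \neq t$ really does exclude $x$ from $\Lambda_t$. Once these containments are pinned down, the measure-theoretic Fubini argument is routine, relying only on $\mu(J_u) = 0$ (cited from \cite{KKST}) and $\sigma$-finiteness of $\mu$ on $\Omega$, which follows from the doubling assumption (A).
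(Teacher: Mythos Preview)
Your argument for $\mu(\Lambda_t)=0$ and for the approximate-continuity statement is correct and is exactly the paper's approach: the paper writes $\Lambda_t\subset A_t\cup J_u$ with $A_t=\{u^-=u^+=t\}$ (your $\tilde u^{-1}(t)$), observes that the $A_t$ are pairwise disjoint, and concludes.

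The deduction of $P(E_t,\Omega)=P(E^t,\Omega)$, however, contains a genuine error. You claim $E_t\subset E^t$ ``since $u^-\le u^+$'', but this inclusion is false: take any $x$ with $u^-(x)=u^+(x)>t$; then $x\in E_t=\{u^+\ge t\}$ while $x\notin E^t=\{u^-\le t\}$. In fact the situation is the opposite of what you describe: since $u^-\le u^+$, for every $x$ either $u^-(x)\le t$ or $u^+(x)\ge u^-(x)>t$, so $E_t\cup E^t=\Omega$, while $E_t\cap E^t=\Lambda_t$. Hence $E_t\triangle E^t=\Omega\setminus\Lambda_t$, which has full (not null) measure. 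Your set computation ``$E^t\setminus E_t=\{u^-\le t<u^+\}$'' is also wrong; the correct identity is $E^t\setminus E_t=\{u^-\le t,\;u^+<t\}=\{u^+<t\}$.

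The fix, which is what the paper does, is to compare $E_t$ not with $E^t$ but with its \emph{complement}: $E_t\triangle(\Omega\setminus E^t)=\Lambda_t$, so when $\mu(\Lambda_t)=0$ we get $\chi_{E_t}=1-\chi_{E^t}$ $\mu$-a.e., and then $P(E_t,\Omega)=P(\Omega\setminus E^t,\Omega)=P(E^t,\Omega)$.
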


\begin{proof}
To prove (\ref{Eorder1}) note that 
$\Lambda_t \subset A_t \cup J_u$,
where
$$A_t = \{ x\in\Omega\colon \, t = u^{-}(x) = u^{+}(x)\}.$$
Since $A_t \cap A_{t'} = \emptyset$ for $t \neq t'$  and $\mu(J_u) =0$, (\ref{Eorder1})
follows. If $\mu( \Lambda_t) = 0$, then $E_t$ differs from $\Omega\setminus E^t$
by a $\mu$-null set and thus $P(E_t,\Omega)=P(E^t,\Omega)$.

If $u$ is approximately continuous at $x$ and $t=u(x)$, then
$t=u^+(x)=u^-(x)$ and thus $x\in \Lambda_t$.
\end{proof}

\begin{thm} \label{Tlevelper}
Let $u\in BV(\Omega)$. Then for a.e.\ $t\in \er$ we have
\begin{equation} \label{Elevel}
co\H^1(\Lambda_t)\le C\, P(E^t, \Omega)
\end{equation}
where $C$ depends only
on $C_P$, $C_{\lambda}$ and $C_{\mu}$.
\end{thm}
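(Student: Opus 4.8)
The plan is to combine the co-area inequality for $BV$ functions with the already-established equivalence $co\H^1 \approx AM(\Gamma(\cdot))$ on Borel sets (Theorem \ref{Tsigma}) and the measure-theoretic comparison of $co\H^1(\partial_*F\cap\Omega)$ with $P(F,\Omega)$ from Theorem \ref{TAhlPAM}. First I would observe that $\Lambda_t = E_t^t = E_t \cap E^t$, and that (up to the null set $\Lambda_t$ itself, which has $\mu$-measure zero for a.e.\ $t$ by Lemma \ref{Lorder}) the set $E_t$ agrees with $\Omega\setminus E^t$. Hence for a.e.\ $t$, the set $E^t$ is a measurable set of the form $\{u^-\le t\}$, and I want to relate $\Lambda_t$ to the measure-theoretic boundary $\partial_* E^t$.

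The key structural fact is that $\Lambda_t \subset \partial_* E^t$ up to a $co\H^1$-null set. Indeed, if $x\in\Lambda_t$ then $u^-(x)\le t\le u^+(x)$; one checks that at such a point the density of $E^t=\{u^-\le t\}$ is neither $0$ nor $1$ except possibly on a small set. More precisely, outside the jump set $J_u$ (which is $co\H^1$-$\sigma$-finite and on which one argues separately) approximate continuity at points of $A_t$ forces $x$ to be a density point of both $E^t$ and its complement only in the degenerate case, so one gets $\Lambda_t \setminus J_u \subset \partial_*E^t$ and then handles $J_u\cap\Lambda_t$ using that $J_u$ decomposes over rational levels. Alternatively, and perhaps more cleanly, I would invoke the known fact from the metric $BV$ literature (e.g.\ \cite{KKST}, \cite{AMP}) that for a.e.\ $t$ one has $co\H^1\big((E^t\text{'s measure-theoretic boundary})\,\triangle\,\Lambda_t\big)=0$, so that $co\H^1(\Lambda_t)\le co\H^1(\partial_*E^t\cap\Omega)$ for a.e.\ $t$.

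Granting that, the chain of inequalities is short: for a.e.\ $t$,
$$co\H^1(\Lambda_t) \le co\H^1(\partial_*E^t\cap\Omega) \le C\,P(E^t,\Omega),$$
where the last step is exactly the right-hand inequality packaged in the proof of Theorem \ref{TAhlPAM} (an application of \cite[Theorem 4.4]{AMP}, valid once $P(E^t,\Omega)<\infty$). Finiteness of $P(E^t,\Omega)$ for a.e.\ $t$ is guaranteed by the co-area formula \cite[Proposition 4.2]{Mi}: $\int_{\er} P(E^t,\Omega)\,dt \le V(u,\Omega)<\infty$, so $P(E^t,\Omega)<\infty$ for a.e.\ $t$, and on the exceptional set the inequality \eqref{Elevel} is vacuous or trivial. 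Collecting the countably many "a.e.\ $t$" conditions (the one from Lemma \ref{Lorder}, the one from the co-area formula, and the one from the boundary identification) still leaves a full-measure set of good $t$.

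The main obstacle I anticipate is the set-theoretic identification $co\H^1(\Lambda_t \setminus \partial_*E^t)=0$ for a.e.\ $t$: one must rule out, on a $co\H^1$-positive set, points where $u^-(x)\le t\le u^+(x)$ yet the density of $\{u^-\le t\}$ is $1$ (or $0$). This is where the jump set $J_u$ must be treated carefully — on $J_u$ the two values $u^-$ and $u^+$ straddle $t$ for an interval of $t$'s, but since $J_u$ is $co\H^1$-$\sigma$-finite (being contained in a countable union of the boundaries $\partial_*\{u>q\}$, $q\in\mathbb Q$, each of finite $co\H^1$ measure by Theorem \ref{TAhlPAM}), a Fubini/summability argument over rational levels shows the contribution is controlled and in fact absorbed into $P(E^t,\Omega)$ for a.e.\ $t$. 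Everything else is a routine concatenation of results already proved in the paper.
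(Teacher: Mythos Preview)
Your reduction to the inclusion $\Lambda_t\subset\partial_*E^t$ (up to a $co\H^1$-null set) for a.e.\ $t$ is where the argument breaks. The $J_u$-part is fine: if $u^-(x)<t<u^+(x)$ then indeed $x\in\partial_*E^t$, and a Fubini argument over the $\sigma$-finite measure $co\H^1\hel J_u$ disposes of the endpoint cases $t\in\{u^-(x),u^+(x)\}$. The problem is the approximately continuous part $A_t=\Lambda_t\setminus J_u$. Approximate continuity of $u$ at $x$ with value $t$ tells you only that $D(\{u>s\},x)=0$ for $s>t$ and $D(\{u<s\},x)=0$ for $s<t$; it says nothing about the densities of $\{u>t\}$ or $\{u\le t\}$ themselves. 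At an ``approximate local extremum'' (think $u(x)=|x|^2$ at the origin, or any point where $\{u\le t\}$ has density $0$ or $1$) one has $x\in A_t\setminus\partial_*E^t$. Your sentence ``approximate continuity at points of $A_t$ forces $x$ to be a density point of both $E^t$ and its complement only in the degenerate case'' is backwards: that is exactly what one would like, but it is not what approximate continuity gives. Showing that such bad points form a $co\H^1$-null set for a.e.\ $t$ is a Morse--Sard type statement for $BV$ functions; it is not in \cite{KKST} or \cite{AMP} in the form you need, and proving it is essentially the content of the theorem.

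The paper avoids this difficulty altogether. Instead of comparing $\Lambda_\tau$ with $\partial_*E^\tau$, it sandwiches $\tau$ between nearby levels $a<\tau<b$ chosen (via Lebesgue points of $t\mapsto P(E^t,\Omega)$) so that $P(E^a,\Omega),P(E^b,\Omega)\le 2P(E^\tau,\Omega)$ and $\mu(E_a^b)$ is small. One then has the elementary inclusion $\Lambda_\tau\subset\partial_*E_a\cup\partial_*E^b\cup\inte E_a^b$; the would-be troublesome points of $A_\tau$ land in $\inte E_a^b$, whose $co\H^1_\delta$-content is controlled by $\Cap(\inte E_a^b,G)\le P(E_a^b,G)\le P(E^a,\Omega)+P(E^b,\Omega)$ via Lemma~\ref{Lnull}. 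This sandwiching plus capacity estimate is the missing idea.
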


\begin{proof}
We first assume that $\Omega$ is bounded.
Let $T$ be the essential infimum of $u$. Then \eqref{Elevel}
obviously holds for $t<T$. If $t>T$, then $\mu(E^t)>0$
and then also $P(E^t, \Omega)>0$ by the isoperimetric inequality
(see e.g.\ \cite{KL}).
Denote $\psi(t)=P(E^t, \Omega)$
and note that $\psi$ is integrable, see \cite{A2} and
\cite{Mi}.
Let $\tau>T$ be a Lebesgue point for $\psi$ such that 
$\mu(\Lambda_\tau)=0$. By Lemma \ref{Lorder} and the Lebesgue differentiation theorem,
a.e.\ $\tau>T$ has these properties.
We show that $t=\tau$
has the required property. 
Choose $\delta>0$.  Lemma \ref{Lnull} gives $\alpha > 0$ such that for each bounded
open set  
$G$ with $\mu(G)< \alpha$
and $E \subset G$ we have
$$co\H^1_{\delta}(E) \leq C\, \Cp(E,G).$$

Now, using Lemma \ref{Lorder} we find
$a,b\in\er$ such that $a<\tau<b$,
$\psi(a)\le 2\psi(\tau)$, $\psi(b)\le 2\psi(\tau)$,
$\mu(\Lambda_a)=\mu(\Lambda_b)=0$ 
and $\mu(E_a^b)<\alpha$. We find an open
set $G \supset E_a^b$ such that still
$\mu(G) < \alpha$. 
Choose $x\in \Lambda_{\tau}$. Then $a<u^+(x)$, $u^-(x)<b$, and thus
$x\in \partial_*E_a$ (if $\overline D(E^a,x)>0$), or 
$x\in \partial_*E^b$ (if $\overline D(E_b,x)>0$), or 
$x\in \inte E_a^b$ (if $D(E^a,x)=D(E_b,x)=0$).
Summarizing,
$$
\Lambda_{\tau}\subset \partial_*E_a\cup \partial_*E^b\cup \inte E_a^b.
$$
We have
$$
\aligned
co\H^1_{\delta}(\partial_*E_a)&\le C\,P(E_a,\Omega)=C\,P(E^a,\Omega)\le 2C \, P(E^{\tau},\Omega),
\\
co\H^1_{\delta}(\partial_*E^b)&\le C \, P(E^b,\Omega))\le 2CP(E^{\tau},\Omega)
\endaligned
$$
and then
$$
\aligned
co\H^1_{\delta}(\inte E_a^b)&\le C\, \Cap(\inte E_a^b,G) \le  C\,P(E_a^b,G)
\\&
\le C(P(E^a,\Omega)+P(E^b, \Omega))\le 4C \, P(E^{\tau}, \Omega).
\endaligned
$$
Letting $\delta \rightarrow 0$ we obtain (\ref{Elevel}).

Suppose that $\Omega$ is unbounded. Fix a point $x_0 \in X$ and for each $i = 1,\,2, \, ...$ let 
$\Omega_i = \Omega \cap B(x_0, i)$ and $u_i = u|\Omega_i$. Denote by $E^t(u_i)$ the set $E^t$ associated
with $u_i$ and other sets, like $\Lambda_{\tau}(u_i)$, similarly. Now for a.e. $t \in \er$,
$\mu(\Lambda_t(u_i))=0$ for every $i$ and so for a.e. $t \in \er$
$$co\H^1(\Lambda_{t}(u_i)) \leq C\, P(E^t(u_i), \Omega_i) \leq C \,P(E^t(u_i), \Omega)
\le C\, P(E^{t}, \Omega) $$
for every $i$
and this easily implies (\ref{Elevel}) for $u$.
\end{proof}

If $u \in BV(\Omega)$ then by the co--area formula \cite[Proposition 4.2]{Mi} for the perimeter 
$P(E^t, \Omega) < \infty$ for
a.e. $t \in \er$. Hence Theorem \ref{Tlevelper} and Lemma \ref{Lorder} yield

\begin{cor} \label{Clevel}
If $u \in BV(\Omega)$, then 
$$
co\H^1(\Lambda_t)<\infty\text{ for a.e. }t\in\er.
$$
If, in addition,
$u$ is (approximately) continuous, then $co\H^1(u^{-1}(t)) < \infty$ for a.e. $t \in \er$.
\end{cor}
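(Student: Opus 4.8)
The plan is to read off both assertions from Theorem~\ref{Tlevelper}: the first by coupling it with the co-area formula, the second by using the pointwise statement of Lemma~\ref{Lorder} to replace the measure-theoretic level set $\Lambda_t$ by the ordinary level set $u^{-1}(t)$.

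First I would record that, since $u\in BV(\Omega)$, the co-area formula \cite[Proposition~4.2]{Mi} --- already used in the proof of Theorem~\ref{Tlevelper} --- shows that $t\mapsto P(E^t,\Omega)$ is integrable on $\er$, so in particular $P(E^t,\Omega)<\infty$ for a.e.\ $t\in\er$. On the other hand, Theorem~\ref{Tlevelper} supplies a set of full measure in $\er$ on which $co\H^1(\Lambda_t)\le C\,P(E^t,\Omega)$. Intersecting these two sets of full measure gives $co\H^1(\Lambda_t)<\infty$ for a.e.\ $t$, which is the first assertion.

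For the second assertion I would assume that $u$ is approximately continuous at every point; this is automatic if $u$ is continuous, because a doubling measure assigns positive mass to every ball, so that smallness of $|u(y)-u(x)|$ for $y$ near $x$ forces $D(\{u>s\},x)=0$ for all $s>u(x)$ and $D(\{u<t\},x)=0$ for all $t<u(x)$, whence $u^+(x)\le u(x)\le u^-(x)$, which with the general inequality $u^-(x)\le u^+(x)$ yields $u^+(x)=u^-(x)=u(x)$. By Lemma~\ref{Lorder}, approximate continuity of $u$ at $x$ forces $x\in\Lambda_{u(x)}$, so for every $t\in\er$ the level set $u^{-1}(t)=\{x\in\Omega\colon u(x)=t\}$ satisfies $u^{-1}(t)\subset\Lambda_t$. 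Monotonicity of $co\H^1$ together with the first assertion then gives $co\H^1(u^{-1}(t))\le co\H^1(\Lambda_t)<\infty$ for a.e.\ $t$.

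I do not expect a genuine obstacle: the entire analytic burden has been discharged in Theorem~\ref{Tlevelper} (and, through it, in Lemma~\ref{Lnull}). The only points that need care are trivial bookkeeping --- the two ``a.e.\ $t$'' conditions, namely integrability of $t\mapsto P(E^t,\Omega)$ and validity of the estimate in Theorem~\ref{Tlevelper}, must be intersected --- together with the observation that the inclusion $u^{-1}(t)\subset\Lambda_t$ genuinely uses the (approximate) continuity hypothesis and would fail for a general $BV$ function, whose jump set can have positive $co\H^1$-measure.
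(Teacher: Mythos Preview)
Your proposal is correct and follows essentially the same route as the paper: combine the co-area formula (giving $P(E^t,\Omega)<\infty$ for a.e.\ $t$) with Theorem~\ref{Tlevelper} for the first assertion, and use the pointwise part of Lemma~\ref{Lorder} to obtain $u^{-1}(t)\subset\Lambda_t$ under (approximate) continuity for the second. Your added remark justifying that continuity implies approximate continuity is not in the paper but is a welcome clarification.
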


\begin{examples} \label{Exlevel}
The above corollary can be used to construct sets in $X$ whose boundaries have finite $co\H^1$--measure.
For example, let  $u \in BV(\Omega) \cap C(\Omega)$. Then for a.e. $t \in \er$ the boundary
of the open set $\{u > t\}$ has finite $co\H^1$--measure. For a more
specific example let $x_0 \in X$ and take $u(x) = d(x, x_0)$.
It follows that 
the topological boundary $\partial B(x_0,r)$ of the ball $B(x_0,r)$ has 
finite $co\H^1$--measure for a.e. $r > 0$. 
This is an improvement of the earlier results since it has been only known that $\mu(\partial B(x_0,r)) = 0$
except for a countable number of $r$ and that $co\H^1(\partial_* B(x_0,r)) < \infty$ for
a.e. $r>0$. More generally, if $K \subset X$ is a bounded set, then $u(x) = \operatorname{dist}(x, K)$ is a Lipschitz function and thus the boundary of 
the $t$--inflation $\{x :\, \dist(x,K) < t\}$  of $K$ has
finite $co\H^1$--measure for a.e. $t>0$.
\end{examples}


\end{document}